\theoremstyle{plain}
\newtheorem{theorem}{Theorem}[section]
\newtheorem{lemma}{Lemma}[section]
\theoremstyle{definition}
\newtheorem{definition}{Definition}[section]
\theoremstyle{remark}
\newtheorem{remark}{Remark}[section]
\DeclareMathOperator*{\argmin}{arg\,min}
\DeclareMathOperator{\dist}{dist}
\DeclareMathOperator{\dom}{dom}
\DeclareMathOperator{\lineal}{lin}
\DeclareMathOperator{\rank}{rank}
\DeclareMathOperator{\trace}{Tr}
\author{M.V. Dolgopolik}
\title{A Unified Approach to the Global Exactness of Penalty and Augmented Lagrangian Functions II: Extended Exactness.}
\begin{document}

\maketitle

\begin{abstract}
In the second part of our study we introduce the concept of global extended exactness of penalty and augmented
Lagrangian functions, and derive the localization principle in the extended form. The main idea behind the extended
exactness consists in an extension of the original constrained optimization problem by adding some extra variables,
and then construction of a penalty/augmented Lagrangian function for the extended problem. This approach allows one
to design extended penalty/augmented Lagrangian functions having some useful properties (such as smoothness),
which their counterparts for the original problem might not possess. In turn, the global exactness of such extended
merit functions can be easily proved with the use of the localization principle presented in this paper, which reduces
the study of global exactness to a local analysis of a merit function based on sufficient optimality conditions and
constraint qualifications. We utilize the localization principle in order to obtain simple necessary and sufficient
conditions for the global exactness of the extended penalty function introduced by Huyer and Neumaier, and in order to
construct a globally exact continuously differentiable augmented Lagrangian function for nonlinear semidefinite
programming problems.
\end{abstract}

\section{Introduction}

In this two-part study we present a new general approach to the analysis of the global exactness of various
penalty and augmented Lagrangian functions for constrained optimization problems in finite dimensional spaces. This
approach allows one to obtain easily verifiable \textit{necessary and sufficient} conditions for the global exactness of
most of the existing penalty/augmented Lagrangian functions in a simple and straightforward manner with the use of the
so-called \textit{localization principle}. This principle, in essence, reduces the study of the global exactness of a
given merit function to a local analysis of behaviour of this function near globally optimal solutions of the
original problem. In turn, the local analysis can be usually performed with the use of some standard tools of
constrained optimization, such as sufficient optimality conditions and constraint qualifications. Thus, the localization
principle provides one with a simple technique for verifying whether a given penalty/augmented Lagrangian function is
globally exact.

A motivation behind the study of the exactness of penalty/augmented Lagrangian functions, a review of the relevant
literature, as well as some general discussions of the framework for the study of global exactness that is adopted in
our research, are presented in the first paper of the series (see the preprint~\cite{Dolgopolik_UnifiedApproach_I}).

Let us note that there exist several different approaches to the definition of global exactness of
penalty/augmented Lagrangian functions. Each part of this two-part study is devoted to the analysis of one of these
approaches. In this paper, we introduce and investigate the concept of global extended exactness, which naturally
appeared within the theory of \textit{exact} augmented Lagrangian functions and \textit{extended} (or
\textit{parametric}) penalty functions.

The first exact augmented Lagrangian function was introduced by Di Pillo and Grippo \cite{DiPilloGrippo1979} for
equality constrained optimization problems in 1979. This augmented Lagrangians was extended to the case of inequality
constrained optimization problems and thoroughly investigated in
\cite{DiPilloGrippo1980,DiPilloGrippo1982,Lucidi1988,DiPilloLucidiPalagi1993,DiPilloLucidi1996,DiPilloLucidi2001,
DiPilloEtAl2002,DiPilloLiuzzi2003,DuZhangGao2006,DuLiangZhang2006,LuoWuLiu2013,DiPilloLiuzzi2011}. Recently, Di Pillo
and Grippo's exact augmented Lagrangian function was extended to the case of nonlinear semidefinite programming problems
\cite{FukudaLourenco}. A general theory of globally exact augmented Lagrangian functions for cone constrained
optimization problems was developed by the author in \cite{Dolgopolik_GSP}. It should be noted that the main feature of
\textit{exact} augmented Lagrangian functions is the fact that one has to minimize these function in primal and dual
variables \textit{simultaneously} in order to compute KKT-points corresponding to globally optimal solution of the
original constrained problem.

The first exact penalty function depending on some additional parameters, apart from the penalty parameter, (i.e.
\textit{extended} or \textit{parametric} penalty function) was introduced by Huyer and Neumaier \cite{HuyerNeumaier} in
2003. This penalty function was generalized and, later on, applied to various optimization problems in 
\cite{Bingzhuang,WangMaZhou,LiYu,MaLiYiu,LinWuYu,JianLin,LinLoxton,MaZhang2015,ZhengZhang2015,Dolgopolik_OptLet}. 
In~\cite{Dolgopolik_OptLet2}, it was shown that Huyer and Neumaier's extended penalty function is exact if and
only if the standard nonsmooth penalty function is exact, and some relations between the least exact penalty
parameters of these functions were obtained. Finally, the general theory of globally exact extended penalty functions
was developed in \cite{DolgopolikMV_UT_2}. As in the case of exact augmented Lagrangians, the main feature of Huyer and
Neumaier's penalty function is the fact that one has to minimize this function in primal variables and an additional
artificial variable simultaneously in order to recover globally optimal solution of the original problem. 

Thus, both Huyer and Neumaier's penalty function and Di Pillo and Grippo's augmented Lagrangian depend on some extra
variables, and the global exactness of these functions is studied in the extended space including the extra variables.
The introduction of these extra variables allows one to guarantee both smoothness and exactness of the corresponding
penalty/augmented Lagrangian functions. In contrast, standard exact penalty functions are always nonsmooth (see,
e.g., \cite{Dolgopolik_UT}, Remark~3). A straightforward generalization of the main idea behind the aforementioned
penalty and augmented Lagrangian functions to the abstract framework leads to the concept of global extended exactness,
which is the main object of our study.

In this paper, we introduce the concept of global extended exactness and prove the localization principle in the
extended form. With the use of this principle we recover existing simple necessary and sufficient conditions for the
global exactness of Huyer and Neumaier's extended penalty function. We also study the global exactness of a
continuously differentiable augmented Lagrangian function for nonlinear semidefinite programming problems that was
recently introduced by the author \cite{Dolgopolik_GSP}. It should be noted that the theorem on the global exactness of
this augmented Lagrangian function was formulated in \cite{Dolgopolik_GSP} without proof. In this paper we present a
detailed proof of this result.

The paper is organized as follows. In Section~\ref{Section_ExtendedExactness} we introduce the definition of global
extended exactness and derive the localization principle in the extended form. In
Section~\ref{Section_ExtendedExact_Appl} we apply this principle to Huyer and Neumaier's penalty function and to a
continuously differentiable augmented Lagrangian function for nonlinear semidefinite programming problems.

\section{Extended Exactness}
\label{Section_ExtendedExactness}

Let $X$ be a finite dimensional normed space, and $M, A \subset X$ be nonempty sets. Throughout this article, we study
the following optimization problem
$$
  \min f(x) \quad \text{subject to} \quad x \in M, \quad x \in A,
  \eqno{(\mathcal{P})}
$$
where $f\colon X \to \mathbb{R} \cup \{ + \infty \}$ is a given function. Denote by $\Omega = M \cap A$ the set of
feasible points of this problem. From this point onwards, we suppose that there exists $x \in \Omega$ such that
$f(x) < + \infty$, and that there exists a globally optimal solution of $(\mathcal{P})$.

Our aim is to ``get rid'' of the constraint $x \in M$ in the problem $(\mathcal{P})$ with the use of a merit function.
Namely, we want to develop a general theory of merit functions $F(\cdot)$ such that globally optimal solutions of the
problem $(\mathcal{P})$ can be easily recovered from points of global minimum of these functions. 

Let $\Lambda$ be a nonempty set of parameters that are denoted by $\lambda$, and let $c > 0$ be 
\textit{the penalty parameter}. Hereinafter, we suppose that a function 
$F \colon X \times \Lambda \times (0, + \infty) \to \mathbb{R} \cup \{ + \infty \}$, $F = F(x, \lambda, c)$, is given.
A connection between this function and the problem $(\mathcal{P})$ is specified below. 

The function $F$, for instance, can be a penalty function with $\Lambda$ being the empty set or an augmented Lagrangian
function with $\lambda$ being a Lagrange multiplier. However, in order not to restrict ourselves to any specific case,
we call $F(x, \lambda, c)$ \textit{a separating function} for the problem $(\mathcal{P})$. The motivation behind this
term comes from the image space analysis \cite{Giannessi_book} in which penalty and augmented Lagrangian functions are
viewed as nonlinear functions \textit{separating} certain nonconvex sets.

In the first part of our study we analysed the concept of global parametric exactness. Recall that the separating
function $F(x, \lambda, c)$ is called \textit{globally parametrically exact} iff there exists $\lambda^* \in \Lambda$
such that the problem
$$
  \min_x F(x, \lambda^*, c) \quad \text{subject to} \quad x \in A
$$
has the same globally optimal solutions as the problem $(\mathcal{P})$ for any sufficiently large $c > 0$. Any such
$\lambda^* \in \Lambda$ is called \textit{an exact tuning parameter}.

Thus, if a globally parametrically exact separating function $F(x, \lambda, c)$ is constructed, then one can minimize
the function $F(\cdot, \lambda^*, c)$ over the set $A$ in order to find globally optimal solutions of the original
problem, i.e. the function $F(x, \lambda, c)$ allows one to incorporate the constraint $x \in M$ into the objective
function without any loss of information about globally optimal solutions. However, in order to utilize such
function $F(x, \lambda, c)$ one must know an exact tuning parameter $\lambda^* \in \Lambda$ in advance, and the problem
of finding an exact tuning parameter can be even more complicated than the original optimization problem itself
(unless, of course, $F(x, \lambda, c)$ is a penalty function, i.e. unless it does not depend on $\lambda$). For
example, if $F(x, \lambda, c)$ is an augmented Lagrangian function, then $\lambda^*$ is usually a vector of Lagrange
multipliers corresponding to a globally optimal solution of the problem $(\mathcal{P})$. Clearly, in most particular
cases the problem of finding these Lagrange multipliers is at least as difficult as the problem $(\mathcal{P})$ itself.

Furthermore, if $F(x, \lambda, c)$ is globally parametrically exact, then every globally optimal solution of the
problem $(\mathcal{P})$ must be, in particular, a local minimizer of the function $F(\cdot, \lambda^*, c)$ on the set
$A$. In the case when $F(x, \lambda, c)$ is an augmented Lagrangian function, this condition typically results in the
assumption that for any globally optimal solution $x^*$ of the problem $(\mathcal{P})$ the pair $(x^*, \lambda^*)$ is a
KKT-point of this problem. Therefore, in particular, if there exist two globally optimal solutions of the problem
$(\mathcal{P})$ with disjoint sets of Lagrange multipliers, then an augmented Lagrangian function cannot be
globally parametrically exact (cf.~\cite{ShapiroSun}).

In order to avoid complications concerning exact tuning parameters, one can consider a different concept of exactness of
the separating function $F(x, \lambda, c)$. Namely, one can consider the \textit{extended} problem
\begin{equation} \label{ExtendedProblem}
  \min_{x, \lambda} F(x, \lambda, c) \quad \text{subject to} \quad (x, \lambda) \in A \times \Lambda,
\end{equation}
and design a separating function $F(x, \lambda, c)$ such that globally optimal solutions of the original problem
$(\mathcal{P})$ can be recovered from globally optimal solution of the extended problem. This simple idea leads us
to the definition of extended exactness.

The separating function $F(x, \lambda, c)$ is called \textit{globally extendedly exact} iff for any sufficiently 
large $c > 0$ the following two conditions are valid:
\begin{enumerate}
\item{if $(x^*, \lambda^*)$ is a globally optimal solution of the extended problem \eqref{ExtendedProblem}, then
$x^*$ is a globally optimal solution of the problem $(\mathcal{P})$.
}

\item{for any globally optimal solution $x^*$ of the problem $(\mathcal{P})$ there exists $\lambda^* \in \Lambda$ such
that the pair $(x^*, \lambda^*)$ is a globally optimal solution of \eqref{ExtendedProblem}.
}
\end{enumerate}
Thus, if the separating function $F(x, \lambda, c)$ is globally extendedly exact, then one can recover globally optimal
solution of the problem $(\mathcal{P})$ by solving extended problem \eqref{ExtendedProblem} with sufficiently large $c$.

In order to obtain necessary and sufficient conditions for the global extended exactness of the function
$F(x, \lambda, c)$ we need to make two assumptions on the set of parameters $\Lambda$. At first, hereinafter, we suppose
that $\Lambda$ is a closed subset of a finite dimensional normed space. This assumption is needed in order to
ensure that every bounded sequence of parameters $\{ \lambda_n \}$ has a convergent subsequence whose limit point
belongs to $\Lambda$.

The second assumption that we make concerns the nature of globally optimal solutions of the extended problem
\eqref{ExtendedProblem}. Namely, we suppose that one chooses which parameters $\lambda^*$ must correspond to globally
optimal solutions $(x^*, \lambda^*)$ of the extended problem \eqref{ExtendedProblem} in the case when the separating
function $F(x, \lambda, c)$ is globally extendedly exact. We suppose that the choice of parameter $\lambda^*$ is
formulated in the form of the equality constraint $\eta(x^*, \lambda^*) = 0$ with a prespecified 
function $\eta(x, \lambda)$.

Let us give a precise formulation of this assumption. Suppose that a function
$\eta \colon X \times \Lambda \to \mathbb{R}$ is given. We also suppose that for any globally optimal solution $x^*$ of
the problem $(\mathcal{P})$ there exists $\lambda^* \in \Lambda$ such that $\eta(x^*, \lambda^*) = 0$.

\begin{definition}
The separating function $F(x, \lambda, c)$ is called \textit{globally extendedly exact} (with respect to the function
$\eta$) iff there exists $c_0 > 0$ such that for any $c \ge c_0$ the following two conditions are valid:
\begin{enumerate}
\item{if $(x^*, \lambda^*)$ is a globally optimal solution of the extended problem \eqref{ExtendedProblem}, then 
$\eta(x^*, \lambda^*) = 0$, and $x^*$ is a globally optimal solution of the problem $(\mathcal{P})$;
}

\item{for any globally optimal solution $x^*$ of the problem $(\mathcal{P})$ and for any $\lambda^* \in \Lambda$ such
that $\eta(x^*, \lambda^*) = 0$ the pair $(x^*, \lambda^*)$ is a globally optimal solution of the problem
\eqref{ExtendedProblem},
}
\end{enumerate}
The greatest lower bounded of all such $c_0$ is denoted by $c^*_{ext}$, and is called \textit{the least exact penalty
parameter} of the separating function $F(x, \lambda, c)$.
\end{definition}

Let us note that the additional assumption $\eta(x^*, \lambda^*) = 0$ naturally appears in all particular examples 
of globally extendedly exact separating function (see examples below). In particular, if $F(x, \lambda, c)$ is an
augmented Lagrangian function with $\lambda$ being a Lagrange multiplier, then it is natural to require that global
minimizers $(x^*, \lambda^*)$ of the extended problem are exactly KKT-points corresponding to globally optimal solutions
$x^*$ of the problem $(\mathcal{P})$. The assumption that $(x^*, \lambda^*)$ is a KKT-point can be easily expressed in
the form of the equality $\eta(x^*, \lambda^*) = 0$ with a suitable function $\eta$.

Our aim is to obtain simple necessary and sufficient conditions for the global extended exactness of $F(x, \lambda, c)$.
As in the case of parametric exactness, these conditions are formulated in the form of the so-called
\textit{localization principle}. This principle allows one to reduce the study of the global exactness of the separating
function $F(x, \lambda, c)$ to a local analysis of behaviour of this function near globally optimal solutions of the
problem $(\mathcal{P})$. Below, we follow the same line of reasoning as during the derivation of the localization
principle in the parametric form in the first part of our study (see~\cite{Dolgopolik_UnifiedApproach_I}).

As it was mentioned above, the localization principle allows one to study local behaviour of the separating function
$F(x, \lambda, c)$ near globally optimal solution of the problem $(\mathcal{P})$ in order to prove the global exactness
of this functions. The following definition describes desired local behaviour of the function $F(x, \lambda, c)$.

\begin{definition}
Let $x^*$ be a locally optimal solution of the problem $(\mathcal{P})$. The separating function $F(x, \lambda, c)$ is
called \textit{locally extendedly exact} at the point $x^*$ iff for any $\lambda^* \in \Lambda$ such 
that $\eta(x^*, \lambda^*) = 0$ there exist $c_0 > 0$ and a neighbourhood $U$ of the point $(x^*, \lambda^*)$ such that 
$$
  F(x, \lambda, c) \ge F(x^*, \lambda^*, c) \quad 
  \forall (x, \lambda) \in U \cap (A \times \Lambda) \quad \forall c \ge c_0.
$$
The greatest lower bound of all such $c_0$ is denoted by $c^*_{ext}(x^*, \lambda^*)$, and is called \textit{the least
exact penalty parameter} of $F(x, \lambda, c)$ at $(x^*, \lambda^*)$.
\end{definition}

Thus, if the separating function $F(x, \lambda, c)$ is locally exact at a globally optimal solution $x^*$, then for any
$\lambda^* \in \Lambda$ such that $\eta(x^*, \lambda^*) = 0$ and for all $c_0 > c^*_{ext}(x^*, \lambda^*)$ 
the pair $(x^*, \lambda^*)$ is a point of local (uniformly with respect to $c \in [c_0, + \infty)$) minimum of the
function $F(\cdot, \cdot, c)$ on the set $A \times \Lambda$. 

Recall that $c > 0$ is called \textit{the penalty parameter}; however, a connection between the parameter $c > 0$ and
penalization is unclear from the definition of the separating function $F(x, \lambda, c)$. The following
definition specifies this connection.

\begin{definition}
The function function $F(x, \lambda, c)$ is called a \textit{penalty-type} separating function iff there exists 
$c_0 > 0$ such that if
\begin{enumerate}
\item{$\{ c_n \} \subset [c_0, + \infty)$ is an increasing unbounded sequence,
}

\item{$(x_n, \lambda_n) \in \argmin_{(x, \lambda) \in A \times \Lambda} F(x, \lambda, c_n)$ for any $n \in \mathbb{N}$,
}

\item{$(x^*, \lambda^*)$ is a cluster point of the sequence $\{ (x_n, \lambda_n) \}$,
}
\end{enumerate}
then $x^*$ is a globally optimal solution of the problem $(\mathcal{P})$ and $\eta(x^*, \lambda^*) = 0$.
\end{definition}

Thus, roughtly speaking, the separating function $F(x, \lambda, c)$ is of penalty-type iff global minimizers of this
function on the set $A \times \Lambda$ converge to pairs $(x^*, \lambda^*)$ with $x^*$ being a globally
optimal solution of $(\mathcal{P})$ and $\eta(x^*, \lambda^*) = 0$ as the penalty parameter $c > 0$ increases
unboundedly. It should be pointed out that the choice of the term ``penalty-type'' is due to the fact that the behaviour
described in the definition above is characteristic of penalty functions.

Note that if there exists $c_0 > 0$ such that for any $c \ge c_0$ the function $F(\cdot, \cdot, c)$ does not attain a
global minimum on the set $A \times \Lambda$, then, formally, $F(x, \lambda, c)$ is of penalty-type. Similarly, if
sequences of global minimizers $(x_n, \lambda_n) \in \argmin_{(x, \lambda) \in A \times \Lambda} F(x, \lambda, c_n)$, 
$n \in \mathbb{N}$, where $c_n \to + \infty$ as $n \to \infty$, do not have cluster points, then 
the function $F(x, \lambda, c)$ is also of penalty-type. In order to exclude these pathological cases from our
consideration, it is natural to introduce the following definition of a \textit{non-degenerate} separating function.
Recall that $\Lambda$ is a subset of a finite dimensional normed space.

\begin{definition}
The separating function $F(x, \lambda, c)$ is said to be \textit{non-degenerate} iff there exist $c_0 > 0$ and $R > 0$
such that for any $c \ge c_0$ there 
exists $(x(c), \lambda(c)) \in \argmin_{(x, \lambda) \in A \times \Lambda} F(x, \lambda, c)$ with 
$\| x(c) \| + \| \lambda(c)) \| \le R$.
\end{definition}

Roughly speaking, the separating function $F(x, \lambda, c)$ is non-degenerate iff it attains a global minimum 
in $(x, \lambda)$ on the set $A \times \Lambda$ for any sufficiently large $c > 0$, and points of global minimum of 
$F(x, \lambda, c)$ on $A \times \Lambda$ do not escape to infinity as the penalty parameter $c > 0$ increases
unboundedly. Note that the non-degeneracy is a natural assumption ensuring that the definition of a penalty-type
separating function is meaningful.

Now we can formulate and prove the main result of this paper that under some natural assumptions connects the global
extended exactness of the separating function $F(x, \lambda, c)$ with its local extended exactness near globally optimal
solutions of  the problem $(\mathcal{P})$. Denote by $\Omega^*$ the set of globally optimal optimal solutions of the
problem $(\mathcal{P})$.

\begin{theorem}[Localization Principle in the Extended Form I] \label{Thrm_LocPrincipleExtended}
Let $\Lambda$ be a closed subset of a finite dimensional normed space, and let the validity of the conditions
\begin{equation} \label{InclImpliesExtnExactness}
  \eta(x^*, \lambda^*) = 0, \quad (x^*, \lambda^*) \in \argmin_{(x, \lambda) \in A \times \Lambda} F(x, \lambda, c)
\end{equation}
for some $x^* \in \Omega^*$, $\lambda^* \in \Lambda$, and $c > 0$ imply that the separating function $F(x, \lambda, c)$
is globally extendedly exact. Suppose also that the set
\begin{equation} \label{ZeroLevelIsClosed}
  \Big\{ (x, \lambda) \in \Omega^* \times \Lambda \Bigm| \eta(x, \lambda) = 0 \Big\}
\end{equation}
is closed. Then the separating function $F(x, \lambda, c)$ is globally extendedly exact if and only if the following
conditions are valid:
\begin{enumerate}
\item{for any $x^* \in \Omega^*$ there exists $\lambda^* \in \Lambda$ such that $\eta(x^*, \lambda^*) = 0$;}

\item{$F(x, \lambda, c)$ is of penalty-type and non-degenerate;
\label{Nondegneracy_Assumpt}}

\item{$F(x, \lambda, c)$ is locally extendedly exact at every globally optimal solution of the problem $(\mathcal{P})$.
\label{LocalExactness_Assumpt}}
\end{enumerate}
\end{theorem}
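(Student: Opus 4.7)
The plan is to prove both directions by closely mirroring the strategy used for the parametric localization principle in Part I, since the only genuinely new ingredient is the extra variable $\lambda$ and the condition $\eta(x^*,\lambda^*)=0$, which is handled by the closedness hypothesis on the set \eqref{ZeroLevelIsClosed}.

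For the necessity direction, I would unpack global extended exactness for $c\ge c^*_{ext}$. Condition~1 is literally part of the standing assumption preceding the definition (and also follows from clause~2 of extended exactness). For local extended exactness at a point $x^*\in\Omega^*$ with $\eta(x^*,\lambda^*)=0$, clause~2 of the definition says $(x^*,\lambda^*)$ is a global minimizer of $F(\cdot,\cdot,c)$ on $A\times\Lambda$ for all $c\ge c^*_{ext}$, so local exactness holds with $U=X\times (\text{ambient space of }\Lambda)$. For non-degeneracy, I would fix any $x^*\in\Omega^*$ and a $\lambda^*$ with $\eta(x^*,\lambda^*)=0$ (condition~1), observe that $(x^*,\lambda^*)$ is a global minimizer for every $c\ge c^*_{ext}$, and take $R=\|x^*\|+\|\lambda^*\|$. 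For the penalty-type property, I would take any sequence of global minimizers $(x_n,\lambda_n)$ at $c_n\to+\infty$ with cluster point $(x^*,\lambda^*)$; clause~1 of extended exactness gives $x_n\in\Omega^*$ and $\eta(x_n,\lambda_n)=0$ for all $n$ with $c_n\ge c^*_{ext}$, and then the closedness of the set \eqref{ZeroLevelIsClosed} (together with closedness of $\Lambda$) forces $x^*\in\Omega^*$ and $\eta(x^*,\lambda^*)=0$.

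For the sufficiency direction, the key idea is to produce, from the three hypotheses, a single pair $(x^*,\lambda^*)$ and a single $c>0$ satisfying \eqref{InclImpliesExtnExactness}; then the standing hypothesis of the theorem yields global extended exactness. I would proceed as follows. By non-degeneracy, pick an increasing unbounded sequence $c_n$ with global minimizers $(x(c_n),\lambda(c_n))$ of $F(\cdot,\cdot,c_n)$ on $A\times\Lambda$ satisfying $\|x(c_n)\|+\|\lambda(c_n)\|\le R$. Since $\Lambda$ is closed and the ambient space is finite dimensional, I can extract a subsequence converging to some $(x^*,\lambda^*)\in X\times\Lambda$. The penalty-type property then gives $x^*\in\Omega^*$ and $\eta(x^*,\lambda^*)=0$. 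Now invoke local extended exactness at $x^*$ with tuning parameter $\lambda^*$: there exist $\widetilde{c}_0>0$ and a neighbourhood $U$ of $(x^*,\lambda^*)$ such that $F(x,\lambda,c)\ge F(x^*,\lambda^*,c)$ on $U\cap(A\times\Lambda)$ for all $c\ge\widetilde{c}_0$. For all sufficiently large $n$, $(x(c_n),\lambda(c_n))\in U$ and $c_n\ge\widetilde{c}_0$, so $F(x(c_n),\lambda(c_n),c_n)\ge F(x^*,\lambda^*,c_n)$; the reverse inequality holds because $(x(c_n),\lambda(c_n))$ is a global minimizer and $(x^*,\lambda^*)\in A\times\Lambda$ is feasible. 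Hence $(x^*,\lambda^*)$ is itself a global minimizer of $F(\cdot,\cdot,c_n)$ on $A\times\Lambda$, and \eqref{InclImpliesExtnExactness} holds with $c=c_n$.

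The step I expect to be the main obstacle is the passage from the limiting pair $(x^*,\lambda^*)$ to an \emph{actual} global minimizer at some finite $c$, because a priori the limit is obtained from minimizers at different penalty parameters and is not directly comparable with the values $F(x^*,\lambda^*,c_n)$. This is exactly what the local extended exactness hypothesis is designed to overcome, via the sandwich argument above; careful bookkeeping with the neighbourhood $U$ and the threshold $\widetilde{c}_0$ is essential. The closedness assumption on the set \eqref{ZeroLevelIsClosed} is what guarantees that the penalty-type property delivers both $x^*\in\Omega^*$ \emph{and} $\eta(x^*,\lambda^*)=0$ at the limit, which is indispensable for applying local exactness at $(x^*,\lambda^*)$.
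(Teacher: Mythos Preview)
Your proposal is correct and follows essentially the same route as the paper's proof: in the necessity direction you read off conditions~1--3 directly from the definition of global extended exactness (using closedness of \eqref{ZeroLevelIsClosed} to verify the penalty-type property), and in the sufficiency direction you extract a convergent subsequence of bounded minimizers via non-degeneracy, apply penalty-type to the cluster point, and then use local exactness as a sandwich to promote $(x^*,\lambda^*)$ to an actual global minimizer at some $c_{n_k}$, triggering hypothesis \eqref{InclImpliesExtnExactness}. One small clarification: the closedness of \eqref{ZeroLevelIsClosed} is needed only in the \emph{necessity} direction (to show $F$ is of penalty-type); in the sufficiency direction the conclusion $x^*\in\Omega^*$, $\eta(x^*,\lambda^*)=0$ comes straight from the assumed penalty-type property, not from closedness.
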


\begin{proof}
Let $F(x, \lambda, c)$ be globally extendedly exact. Then, in particular, for any $c > c^*_{ext}$,
and for all $x^* \in \Omega^*$ and $\lambda^* \in \Lambda$ such that $\eta(x^*, \lambda^*) = 0$ 
the pair $(x^*, \lambda^*)$ is a global minimizer of the function $F(x, \lambda, c)$ in $(x, \lambda)$ on 
the set $A \times \Lambda$. Therefore $F(x, \lambda, c)$ is non-degenerate with $R = \| x^* \| + \| \lambda^* \|$, and
locally extendedly exact at every globally optimal solution of the problem $(\mathcal{P})$.

Let, now, $\{ c_n \} \subset (c^*_{ext}, + \infty)$ be an increasing unbounded sequence, and let a sequence
$\{ (x_n, \lambda_n) \}$ be such  
that $(x_n, \lambda_n) \in \argmin_{(x, \lambda) \in A \times \Lambda} F(x, \lambda, c_n)$
for all $n \in \mathbb{N}$. From the fact that $F(x, \lambda, c)$ is globally extendedly exact it follows that 
for any $n \in \mathbb{N}$ one has $x_n \in \Omega^*$ and $\eta(x_n, \lambda_n) = 0$. Hence taking into account the fact
that the set \eqref{ZeroLevelIsClosed} is closed one gets that any cluster point $(x^*, \lambda^*)$ of the sequence 
$\{ (x_n, \lambda_n) \}$, if exists, satisfies the conditions $x^* \in \Omega^*$ and $\eta(x^*, \lambda^*) = 0$, which
implies that $F(x, \lambda, c)$ is a penalty-type separating function. Thus, the ``only if'' part of the theorem is
proved. Let us prove the ``if'' part.

Let $\{ c_n \} \subset (0, + \infty)$ be an increasing unbounded sequence. By condition~\ref{Nondegneracy_Assumpt}
there exist $n_0 \in \mathbb{N}$ and $R > 0$ such that for any $n \ge n_0$ there exists 
$(x_n, \lambda_n) \in \argmin_{(x, \lambda) \in A \times \Lambda} F(x, \lambda, c)$ with 
$\| x_n \| + \| \lambda_n \| \le R$. Since the sequence $\{ (x_n, \lambda_n) \}$, $n \ge n_0$, is bounded, and 
$A \times \Lambda$ is a subset of a finite dimensional normed space, there exists a subsequence $\{ (x_{n_k},
\lambda_{n_k}) \}$ converging to some $(x^*, \lambda^*)$. 

By condition~\ref{Nondegneracy_Assumpt} the separating function $F(x, \lambda, c)$ is of penalty-type. 
Therefore $x^* \in \Omega^*$ and $\eta(x^*, \lambda^*) = 0$. Applying condition~\ref{LocalExactness_Assumpt} one obtains
that there exist $\widehat{c} > 0$ and a neighbourhood $U$ of the pair $(x^*, \lambda^*)$ such that
\begin{equation} \label{LocalExtendExactness}
  F(x, \lambda, c) \ge F(x^*, \lambda^*, c) \quad \forall (x, \lambda) \in U \cap (A \times \Lambda) 
  \quad \forall c \ge \widehat{c}.
\end{equation}
From the fact that $\{ c_n \}$ is an increasing unbounded sequence it follows that $c_n > \widehat{c}$ for any $n$ large
enough. Furthermore, one has $(x_{n_k}, \lambda_{n_k}) \in U$ for any sufficient large $k$ due to the fact that
$(x^*, \lambda^*)$ is a limit point of the subsequence $\{ (x_{n_k}, \lambda_{n_k}) \} \subset A \times \Lambda$.
Consequently, applying \eqref{LocalExtendExactness} one obtains that there exists $k_0 \in \mathbb{N}$ such that 
$$
  F(x_{n_k}, \lambda_{n_k}, c_{n_k}) \ge F(x^*, \lambda^*, c_{n_k}) \quad \forall k \ge k_0,
$$
which implies that $(x^*, \lambda^*)$ is a point of global minimum of $F(x, \lambda, c_{n_k})$ in $(x, \lambda)$ on 
the set $A \times \Lambda$ for any $k \ge k_0$ by virtue of the definition of the sequence $\{ (x_n, \lambda_n) \}$.
Thus, the triplet $(x^*, \lambda^*, c_{n_k})$ satisfies conditions \eqref{InclImpliesExtnExactness} for any $k \ge k_0$,
which implies that $F(x, \lambda, c)$ is globally extendedly exact.
\end{proof}

\begin{remark}
{(i)~The assumptions that the validity of \eqref{InclImpliesExtnExactness} implies the global extended exactness of
$F(x, \lambda, c)$ simply means that instead of verifying that the sets
$\{ (x^*, \lambda^*) \in \Omega^* \times \Lambda \mid \eta(x^*, \lambda^*) = 0 \}$ and 
$\argmin_{(x, \lambda) \in A \times \Lambda} F(x, \lambda, c)$ coincide, it is sufficient to check that these sets only
intersect in order to prove the global extended exactness of the separating function $F(x, \lambda, c)$. Let us note
that this assumption is automatically satisfied for all particular examples of the function
$F(x, \lambda, c)$ (see examples below). Thus, this assumption is not restrictive in all important cases.
}

\noindent{(ii)~Note that the ``only if'' part of the theorem above is valid without the assumption that conditions
\eqref{InclImpliesExtnExactness} implies the global extended exactness $F(x, \lambda, c)$.
}

\noindent{(iii)~It is easily seen the set \eqref{ZeroLevelIsClosed} is closed, in particular, if $\Omega$ is
closed, $f$ is l.s.c. on $\Omega$, and $\eta$ is continuous on $\Omega \times \Lambda$. 
}
\end{remark}

Let us also present a slightly different formulation of the localization principle in the extended form that is more
convenient for applications.

\begin{theorem}[Localization Principle in the Extended Form II] \label{Thrm_LocPrincipleExtended_SubLevel}
Let $\Lambda$ be a closed subset of a finite dimensional normed space, and let the validity of the
conditions \eqref{InclImpliesExtnExactness} for some $x^* \in \Omega^*$, $\lambda^* \in \Lambda$, and $c > 0$ implies
that the separating function $F(x, \lambda, c)$ is globally extendedly exact. Suppose, finally, that the sets $A$ and
$\{ (x, \lambda) \in \Omega^* \times \Lambda \mid \eta(x, \lambda) = 0 \}$ are closed, and 
the function $F(\cdot, \cdot, c)$ is l.s.c. on $A \times \Lambda$ for any $c > 0$. Then the separating 
function $F(x, \lambda, c)$ is globally extendedly exact if and only if the following conditions are valid:
\begin{enumerate}
\item{for any $x^* \in \Omega^*$ there exists $\lambda^* \in \Lambda$ such that $\eta(x^*, \lambda^*) = 0$;}

\item{$F(x, \lambda, c)$ is of penalty-type;}

\item{$F(x, \lambda, c)$ is locally extendedly exact at every globally optimal solution of the problem $(\mathcal{P})$;}

\item{there exist $c_0 > 0$, $x^* \in \Omega^*$, $\lambda^* \in \Lambda$, and a bounded 
set $K$ such that $\eta(x^*, \lambda^*) = 0$ and
$$
  S_c(x^*, \lambda^*) := \Big\{ (x, \lambda) \in A \times \Lambda \Bigm| F(x, \lambda, c) < F(x^*, \lambda^*, c) \Big\}
  \subseteq K 
  \quad \forall c \ge c_0.
$$
\label{SublevelBoundedness_Assumption}}
\end{enumerate}
\end{theorem}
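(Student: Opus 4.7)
The plan is to reduce this theorem to the first localization principle, Theorem~\ref{Thrm_LocPrincipleExtended}, by showing that the new hypotheses---closedness of $A$, lower semicontinuity of $F(\cdot, \cdot, c)$, and the sublevel-set boundedness in condition~\ref{SublevelBoundedness_Assumption}---together imply non-degeneracy of $F(x, \lambda, c)$. Once this is in hand, both implications of the present theorem follow from Theorem~\ref{Thrm_LocPrincipleExtended}, whose remaining hypotheses (closedness of $\Lambda$ and of the $\eta$-zero level set on $\Omega^*$) are already assumed here.

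For the ``only if'' direction, suppose $F(x, \lambda, c)$ is globally extendedly exact. Conditions 1--3 are obtained exactly as in the ``only if'' part of Theorem~\ref{Thrm_LocPrincipleExtended}, noting only that the earlier assertion ``penalty-type and non-degenerate'' is being weakened to ``penalty-type''. For condition~\ref{SublevelBoundedness_Assumption}, use condition~1 to fix some $x^* \in \Omega^*$ and $\lambda^* \in \Lambda$ with $\eta(x^*, \lambda^*) = 0$; global extended exactness makes $(x^*, \lambda^*)$ a global minimizer of $F(\cdot, \cdot, c)$ on $A \times \Lambda$ for every $c \ge c^*_{ext}$, hence $S_c(x^*, \lambda^*) = \emptyset$ for each such $c$, and any bounded set $K$ trivially witnesses the inclusion.

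For the ``if'' direction, the whole task is to derive non-degeneracy. Fix $c_0$, $x^*$, $\lambda^*$, $K$ as in condition~\ref{SublevelBoundedness_Assumption}, let $c \ge c_0$, and split on whether the infimum $m_c := \inf_{(x, \lambda) \in A \times \Lambda} F(x, \lambda, c)$ equals $F(x^*, \lambda^*, c)$ or is strictly smaller. In the first case $(x^*, \lambda^*)$ is itself a global minimizer. In the second case any minimizing sequence $\{(x_n, \lambda_n)\}$ eventually satisfies $F(x_n, \lambda_n, c) < F(x^*, \lambda^*, c)$ and so lies in $S_c(x^*, \lambda^*) \subseteq K$; using boundedness of $K$, closedness of $A$ and $\Lambda$, and finite dimensionality, extract a convergent subsequence and apply lower semicontinuity of $F(\cdot, \cdot, c)$ to identify its limit as a global minimizer lying in $\overline{K}$. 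In either case the minimizer is bounded by $R := \max(\|x^*\| + \|\lambda^*\|,\, \sup_{(x, \lambda) \in K}(\|x\| + \|\lambda\|))$, a quantity independent of $c \ge c_0$; this is the definition of non-degeneracy.

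The only real subtlety I foresee is the case split itself: since $S_c$ is defined with strict inequality, it is empty precisely when $(x^*, \lambda^*)$ already attains the infimum, and this degenerate subcase must be handled separately from the minimizing-sequence argument. Beyond this, the bookkeeping is routine: the uniformity of the bound $R$ in $c$ is immediate from the fact that the quadruple $(c_0, x^*, \lambda^*, K)$ in condition~\ref{SublevelBoundedness_Assumption} is chosen once and for all, so non-degeneracy reduces cleanly to the geometry of a single bounded set $K$, and Theorem~\ref{Thrm_LocPrincipleExtended} finishes the argument.
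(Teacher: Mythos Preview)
Your proposal is correct and follows essentially the same strategy as the paper: reduce to Theorem~\ref{Thrm_LocPrincipleExtended} by showing that condition~\ref{SublevelBoundedness_Assumption} together with lower semicontinuity and closedness yields non-degeneracy, and handle the ``only if'' direction by noting that $S_c(x^*,\lambda^*)$ is empty once $c$ exceeds the least exact penalty parameter. The only organizational difference is that the paper, when $S_c(x^*,\lambda^*)=\emptyset$ for some $c\ge c_0$, invokes the hypothesis on \eqref{InclImpliesExtnExactness} directly to conclude exactness rather than folding this case into the non-degeneracy verification as you do; both routes are equally valid.
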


\begin{proof}
Let us prove the ``if'' part of the theorem. The validity of the ``only if'' part of the theorem follows directly from 
Theorem~\ref{Thrm_LocPrincipleExtended}, and the fact that if $F(x, \lambda, c)$ is globally extendedly exact, then the
set $S_c(x^*, \lambda^*)$ is empty for any $c > c^*_{ext}$, $x^* \in \Omega^*$ and $\lambda^* \in \Lambda$ such 
that $\eta(x^*, \lambda^*) = 0$.

Let $x^* \in \Omega^*$ and $\lambda^* \in \Lambda$ be from condition~\ref{SublevelBoundedness_Assumption}. Observe that
if the set $S_c(x^*, \lambda^*)$ is empty for some $c \ge c_0$, then conditions \eqref{InclImpliesExtnExactness} are
satisfied, and one obtains that $F(x, \lambda, c)$ is globally extendedly
exact. Consequently, one can suppose that $S_c(x^*, \lambda^*) \ne \emptyset$ for all $c \ge c_0$. Taking into account
the facts that $F(\cdot, \cdot, c)$ is l.s.c. on $A \times \Lambda$, and the set $A \times \Lambda$ is closed one gets
that for any $c \ge c_0$ the function $F(x, \lambda, c)$ attains a global minimum in $(x, \lambda)$ on
the set $A \times \Lambda$. Furthermore, for all $c \ge c_0$ every point of global minimum of $F(x, \lambda, c)$ in 
$(x, \lambda)$ on $A \times \Lambda$ belongs to a bounded set $K$. Therefore the function $F(x, \lambda, c)$ is
non-degenerate. Then applying Theorem~\ref{Thrm_LocPrincipleExtended} one obtains the desired result.
\end{proof}

\begin{remark}
The fourth condition in the theorem above may seem superficial in comparison with the rather natural non-degeneracy
condition. However, as we show below, this condition becomes more convenient than the non-degeneracy assumption in
all important particular cases.
\end{remark}

\section{Applications of the Localization Principle}
\label{Section_ExtendedExact_Appl}

Below, we present two particular examples of separating functions that are globally extendedly exact, and demonstrate
how one can utilize the localization principle to obtain necessary and sufficient conditions for the global extended
exactness of these separating function in a simple and straightforward manner.

\subsection{Example I: Huyer and Neumaier's Penalty Function}

Let us apply the localization principle in the extended from to a simple modification of the exact
penalty function proposed by Huyer and Neumaier in \cite{HuyerNeumaier}. We call this penalty functions
\textit{singular}. For theoretical results on the exactness of singular penalty functions as well as applications of
these functions to various optimization problems, see
\cite{HuyerNeumaier,Bingzhuang,WangMaZhou,LiYu,MaLiYiu,LinWuYu,JianLin,LinLoxton,MaZhang2015,ZhengZhang2015,
Dolgopolik_OptLet,Dolgopolik_OptLet2,DolgopolikMV_UT_2}.

Let the set $M$ have the form $M = \{ x \in X \mid 0 \in G(x) \}$, where $G \colon X \rightrightarrows Y$ is a given
set-valued mapping with closed values, and $Y$ is a normed space (not necessarily finite dimensional). Let, also,
$\Lambda = \mathbb{R}_+ = [0, + \infty)$. In order to distinguish points of the set $\Lambda$ from Lagrange
multipliers, in this subsection we denote them as $p$.

Fix arbitrary $w \in Y$, and choose nondecreasing functions $\phi \colon [0, + \infty] \to [0, + \infty]$ and
$\omega \colon \mathbb{R}_+ \to [0, +\infty]$ such that $\phi(t) = 0$ iff $t = 0$ and $\omega(t) = 0$ iff $t = 0$.
Following the ideas of \cite{Dolgopolik_OptLet2,DolgopolikMV_UT_2}, define the singular penalty function
$$
  F(x, p, c) = \begin{cases}
    f(x) + \dfrac{c}{p} \phi\Big(\dist^2\big( 0, G(x) - p w \big) \Big) + c \omega(p), & \text{if }	p > 0, \\
    f(x), & \text{if } p = 0, \: x \in \Omega, \\
    + \infty, & \text{if } p = 0, \: x \notin \Omega.
  \end{cases}
$$
Note that
\begin{equation} \label{InfinitePenaltyFunc}
  F(x, 0, c) = \begin{cases}
    f(x), & \text{if $x$ is feasible}, \\
    + \infty, & \text{otherwise}.
  \end{cases}
\end{equation}
Consequently, the problem of minimizing the function $F(x, 0, c)$ over the set $A$ is equivalent to the problem
$(\mathcal{P})$. Furthermore, one can verify that under very mild additional assumptions
$F(x, p, c) \to F(x, 0, c)$ as $p \to + 0$ for any $x \in X$ and $c > 0$. In addition, if the functions
$f$, $\phi$ and $\omega$ are continuously differentiable on their domains, and the multifunction $G$ is actually
single-valued and Fr\'echet differentiable, then the singular penalty function $F(x, p, c)$ is continuously
differentiable at every point $(x, p) \in \dom F(\cdot, \cdot, c)$ such that $p > 0$. Thus, for any $p > 0$ the
function $F(x, p, c)$ can be viewed as a continuously differentiable approximation of the function $F(x, 0, c)$.
Finally, let us note that the vector $w$ is added into the definition of $F(x, p, c)$ in order for this penalty function
to resemble the Hestenes-Powell-Rockafellar augmented Lagrangian function (see~\cite{HuyerNeumaier} for more details,
and \cite{Dolgopolik_OptLet2} for some results on a connection between the choice of $w$ and the value of the least
exact penalty parameter of the function $F(x, p, c)$).

Our aim is to demonstrate that under some natural assumptions the singular penalty function $F(x, p, c)$ is
globally extendedly exact with $\eta(x, p) = p$, i.e. for any sufficiently large $c$ the problem
$$
  \min_{(x, p)} F(x, p, c) \quad (x, p) \in A \times \mathbb{R}_+
$$
is equivalent to the original problem $(\mathcal{P})$ in the sense that any globally optimal solution of the above
problem has the form $(x^*, 0)$ with $x^*$ being a globally optimal solution of the problem $(\mathcal{P})$. We utilize
the localization principle in the extended from in order to obtain this result. Define $\eta(x, p) = p$.

\begin{theorem}[Localization Principle for Singular Penalty Functions] \label{Thrm_LocPrinciple_SingPF}
Let $A$ be closed, $f$ be l.s.c. on $A$, $\phi$ and $\omega$ be l.s.c., and $G$ be outer semicontinuous on $A$. Then the
singular penalty function $F(x, p, c)$ is globally extendedly exact if and only if it is locally extendedly
exact at every globally optimal solution of the problem $(\mathcal{P})$, and one of the following two conditions is
satisfied:
\begin{enumerate}
\item{the function $F(x, p, c)$ is non-degenerate;
}

\item{there exists $c_0 > 0$ such that the set $\{ (x, p) \in A \times \mathbb{R}_+ \mid F(x, p, c_0) < f^* \}$ is
bounded, where $f^* = \inf_{x \in \Omega} f(x)$ is the optimal value of the problem $(\mathcal{P})$.
}
\end{enumerate}
\end{theorem}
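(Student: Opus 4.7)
The plan is to apply the two localization principles (Theorems~\ref{Thrm_LocPrincipleExtended} and~\ref{Thrm_LocPrincipleExtended_SubLevel}) with $\Lambda = \mathbb{R}_+$ and $\eta(x,p) = p$. Several ingredients are immediate: $\Lambda$ is closed in $\mathbb{R}$; for every $x^* \in \Omega^*$ the choice $p^* = 0$ gives $\eta(x^*, p^*) = 0$; and the set $\{(x,p) \in \Omega^* \times \mathbb{R}_+ \mid \eta(x,p)=0\} = \Omega^* \times \{0\}$ is closed, since outer semicontinuity of $G$ with closed values makes $M$ closed, hence $\Omega = A \cap M$ is closed, and l.s.c.\ of $f$ on $A$ then makes $\Omega^* = \{x \in \Omega \mid f(x) \le f^*\}$ closed. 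Lower semicontinuity of $F(\cdot,\cdot,c)$ on $A \times \mathbb{R}_+$ for each fixed $c > 0$ follows from the l.s.c.\ assumptions on $f$, $\phi$ and $\omega$, combined with the l.s.c.\ of $x \mapsto \dist(0, G(x) - pw)$ guaranteed by outer semicontinuity of $G$.

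The central step is to verify the standing hypothesis of both principles: if \eqref{InclImpliesExtnExactness} holds at some triple $(x^*, 0, c_1)$ with $x^* \in \Omega^*$, then $F$ is globally extendedly exact. The assumption gives $\min_{A\times\mathbb{R}_+} F(\cdot,\cdot,c_1) = F(x^*,0,c_1) = f^*$. Since $F(x,0,c)$ does not depend on $c$ and, for $p > 0$,
$$
  F(x, p, c) = F(x, p, c_1) + (c - c_1)\Bigl(\tfrac{1}{p}\phi\bigl(\dist^2(0, G(x) - pw)\bigr) + \omega(p)\Bigr),
$$
the map $c \mapsto F(x,p,c)$ is nondecreasing, so $\min_{A\times\mathbb{R}_+} F(\cdot,\cdot,c) = f^*$ for every $c \ge c_1$, still attained at each $(\tilde x, 0)$ with $\tilde x \in \Omega^*$. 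Conversely, any global minimizer $(\hat x, \hat p)$ at $c > c_1$ with $\hat p > 0$ would satisfy $F(\hat x, \hat p, c) \ge f^* + (c - c_1)\omega(\hat p) > f^*$ because $\omega(\hat p) > 0$, contradicting optimality. Thus all minimizers at $c > c_1$ have $\hat p = 0$, which forces $\hat x \in \Omega$ and $f(\hat x) = f^*$, i.e.\ $\hat x \in \Omega^*$; global extended exactness with $c^*_{ext} \le c_1$ follows.

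What remains is the penalty-type property and the match with hypothesis~(ii). Given an increasing unbounded $\{c_n\}$ and global minimizers $(x_n, p_n)$ on $A \times \mathbb{R}_+$, comparison with any fixed $(\bar x, 0)$, $\bar x \in \Omega^*$, gives $F(x_n, p_n, c_n) \le f^*$, whence $c_n \omega(p_n) \le f^* - f(x_n)$ and $\phi(\dist^2(0, G(x_n) - p_n w)) \le p_n (f^* - f(x_n))/c_n$. Along any subsequence converging to a cluster point $(x^*, p^*)$, the l.s.c.\ of $f$ bounds $f(x_n)$ from below, so $\omega(p_n) \to 0$ and $\phi(\dist^2(\cdots)) \to 0$; the zero-set assumptions on $\phi$ and $\omega$ then yield $p^* = 0$ and $\dist(0, G(x_n)) \to 0$, and outer semicontinuity of $G$ forces $0 \in G(x^*)$, hence $x^* \in \Omega$ and $f(x^*) \le f^*$, i.e.\ $x^* \in \Omega^*$. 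Under hypothesis~(i) Theorem~\ref{Thrm_LocPrincipleExtended} applies directly; under hypothesis~(ii) the sublevel set coincides with $S_{c_0}(x^*, 0)$ for any $x^* \in \Omega^*$, and the pointwise monotonicity of $F$ in $c$ gives $S_c(x^*, 0) \subseteq S_{c_0}(x^*, 0)$ for every $c \ge c_0$, so Theorem~\ref{Thrm_LocPrincipleExtended_SubLevel} applies. The main obstacle is the second paragraph: ruling out positive-$p$ minimizers above the initial threshold rests crucially on the pointwise monotonicity of $F$ in $c$ and on the strict positivity $\omega(p) > 0$ for $p > 0$.
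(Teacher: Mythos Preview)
Your proof is correct and follows essentially the same route as the paper: verify that condition~\eqref{InclImpliesExtnExactness} forces global extended exactness (via monotonicity of $F$ in $c$ and strict positivity of $\omega$ on $(0,\infty)$), check the penalty-type property, and then invoke Theorems~\ref{Thrm_LocPrincipleExtended} and~\ref{Thrm_LocPrincipleExtended_SubLevel}. The only difference is that where the paper cites \cite[Lemma~4.2, Remark~12, Theorem~4.13]{DolgopolikMV_UT_2} for the lower semicontinuity of $F(\cdot,\cdot,c)$ and for the convergence $p_n\to 0$, $\dist(0,G(x_n))\to 0$, you supply direct self-contained arguments, which is a perfectly acceptable (and arguably cleaner) alternative.
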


\begin{proof}
Taking into account \eqref{InfinitePenaltyFunc}, and the fact that for any $p > 0$ either $F(x, p, c)$ is strictly
increasing in $c$ or $F(x, p, \cdot) \equiv + \infty$ it is easy to check that the validity of the condition 
$(x^*, 0) \in \argmin_{(x, \lambda) \in A \times \Lambda} F(x, \lambda, c)$ implies that $F(x, p, c)$ is globally
extendedly exact.

Applying \cite[Lemma~4.2 and Remark~12]{DolgopolikMV_UT_2} one gets that under the assumptions of the theorem the
singular penalty function is l.s.c. in $(x, p)$ on $A \times \mathbb{R}_+$ for any $c > 0$. Therefore it remains to
check that $F(x, p, c)$ is a penalty-type separating function. Then applying the localization principle in the extended
form (Theorems~\ref{Thrm_LocPrincipleExtended} and \ref{Thrm_LocPrincipleExtended_SubLevel}) one obtains the desired
result.

Let $\{ c_n \} \subset (0, + \infty)$ be an increasing unbounded sequence, and let 
$(x_n, p_n) \in \argmin_{(x, p) \in A \times \mathbb{R}_+} F(x, p, c_n)$ for all $n \in \mathbb{N}$. Suppose also that
$(x^*, p^*)$ is a cluster point of the sequence $\{ (x_n, p_n) \}$. Applying \cite[Theorem~4.13]{DolgopolikMV_UT_2} one
obtains that $p_n \to 0$, and $\dist( 0, G(x_n)) \to 0$ as $n \to \infty$. Hence $p^* = 0$, and taking into account the
outer semicontinuity of the multifunction $G$ one can easily check that $0 \in G(x^*)$, i.e. $x^*$ is a feasible point
of the problem $(\mathcal{P})$.

Since $F(x^*, 0, c) = f^*$ for any $x^* \in \Omega^*$, one has $F(x_n, p_n, c_n) \le f^*$, and $f(x_n) \le f^*$ due to
the fact that $F(x, p, c) \ge f(x)$ for all $x \in X$, $p \in \mathbb{R}_+$ and $c > 0$ by the definition of 
$F(x, p, c)$. Cosequently, passing to the limit as $n \to \infty$, and applying the lower semicontinuity of the function
$f$ one obtains that $x^*$ is a globally optimal solution of the problem $(\mathcal{P})$, which implies that 
$F(x, p, c)$ is a penalty-type separating function.
\end{proof}

\begin{remark}
{(i)~For the sake of completeness, let us note that the singular penalty function $F(x, p, c)$ is locally
extendedly exact at a globally optimal solution $x^*$ of the problem $(\mathcal{P})$, provided $f$ is Lipschitz
continuous near $x^*$, $G$ is metrically subregular at $(x^*, 0)$, and there exist $\phi_0 > 0$, $\omega_0 > 0$ and 
$t_0 > 0$ such that $\phi(t) \ge \phi_0 t$ and $\omega(t) \ge \omega_0 t$ for all $t \in [0, t_0]$
(see~\cite[Theorem~4.1]{DolgopolikMV_UT_2}).
}

\noindent{(ii)~One can verify that under some natural assumptions on the functions $\phi$ and $\omega$ the set 
$\{ (x, p) \in A \times \mathbb{R}_+ \mid F(x, p, c_1) < f^* \}$ is bounded, in particular, if the set
$\{ x \in A \mid f(x) + c_2 d(0, G(x)) < f^* \}$ is bounded for some $c_2 > 0$ \cite[Lemma~4.5]{DolgopolikMV_UT_2}.
}

\noindent{(iii)~It is worth noting that under some mild assumptions on the functions $\phi$ and $\omega$ the singular
penalty function $F(x, p, c)$ is globally extendedly exact iff the standard penalty function 
$F_0(x, c) = f(x) + c \dist(0, G(x))$ for the problem ($\mathcal{P})$ is globally exact (see~\cite{Dolgopolik_OptLet2}).
}
\end{remark}

\subsection{Example II: An Exact Augmented Lagrangian Function for Semidefinite Optimization}

In this section, we apply the localization principle in the extended form to an \textit{exact} augmented Lagrangian
function. The first exact augmented Lagrangian function was introduced by Di Pillo and Grippo in
\cite{DiPilloGrippo1979}, and later on was improved and thoroughly investigated by many researchers 
\cite{DiPilloGrippo1979,DiPilloGrippo1980,DiPilloGrippo1982,Lucidi1988,DiPilloLucidiPalagi1993,DiPilloLucidi1996,
DiPilloLucidi2001,DiPilloEtAl2002,DiPilloLiuzzi2003,DuZhangGao2006,DuLiangZhang2006,LuoWuLiu2013,DiPilloLiuzzi2011,
FukudaLourenco}. The general theory of exact augmented Lagrangian functions for cone constrained optimization problems
was developed by the author in \cite{Dolgopolik_GSP}.

The main goal of this section is to introduce a continuously differentiable exact augmented Lagrangian function for
nonlinear semidefinite programming problem, and to prove its global extended exactness with the use of the localization
principle. This augmented Lagrangian function was first introduced by the author in \cite{Dolgopolik_GSP}; however, the
paper \cite{Dolgopolik_GSP} does not contain a proof of the global exactness of this augmented Lagrangian. Here we
present a detailed and almost self-contained (apart from some technical results from semidefinite optimization) proof of
this result.

Let us note that a different exact augmented Lagrangian function for semidefinite programs was earlier introduced in
\cite{FukudaLourenco}. However, it should be underlined that our augmented Lagrangian function is defined via the
problem data directly, while the augmented Lagrangian function from \cite{FukudaLourenco} depends on a solution of a
certain system of linear equations. Furthermore, in order to correctly define the augmented Lagrangian function from
\cite{FukudaLourenco} one must suppose that \textit{every} feasible point of the nonlinear semidefinite program is
nondegenerate \cite[Def.~4.70]{BonnansShapiro}, which might be a too restrictive assumption for many applications.
In contrast, we assume that only globally optimal solutions of the problem under consideration are nondegerate.

Let $X = A = \mathbb{R}^d$, and suppose that the set $M$ has the form
$$
  M = \Big\{ x \in \mathbb{R}^d \Bigm| G(x) \preceq 0, \: h(x) = 0 \Big\},
$$
where $G \colon X \to \mathbb{S}^l$ and $h \colon X \to \mathbb{R}^s$ are given functions, $\mathbb{S}^l$ is the set of
all $l \times l$ real symmetric matrices, and the relation $G(x) \preceq 0$ means that the matrix $G(x)$ is negative
semidefinite. We suppose that the space $\mathbb{S}^l$ is equipped with the Frobenius norm 
$\| A \|_F = \sqrt{\trace(A^2)}$. Note that this norm corresponds to the inner product 
$\langle A, B \rangle = \trace(A B)$. In this case the problem $(\mathcal{P})$ is a nonlinear semidefinite programming
problem of the form
$$
  \min f(x) \quad \text{subject to} \quad G(x) \preceq 0, \quad h(x) = 0.
$$
Suppose that the functions $f$, $G$ and $h$ are continuously differentiable. For any $\lambda \in \mathbb{S}^l$ 
and $\mu \in \mathbb{R}^s$ denote by
$$
  L(x, \lambda, \mu) = f(x) + \trace( \lambda G(x) ) + \langle \mu, h(x) \rangle
$$
the standard Lagrangian function for the nonlinear semidefinite programming problem. For the sake of shortness we
will sometimes denote $\nu = (\lambda, \mu)$. 

Our aim is to introduce a continuously differentiable augmented Lagrangian function $\mathscr{L}(x, \lambda, \mu, c)$
for the problem $(\mathcal{P})$ that is globally extendedly exact with respect to a function $\eta(x, \lambda, \mu)$
such that $\eta(x^*, \lambda^*, \mu^*) = 0$ for some $x^* \in \Omega^*$ iff $(x^*, \lambda^*, \mu^*)$ is a KKT-point of
the problem $(\mathcal{P})$. In this case one obtains that the augmented Lagrangian 
function $\mathscr{L}(x, \lambda, \mu, c)$ is globally extendedly exact iff its points of global minimum are exactly
KKT-points of the problem $(\mathcal{P})$ corresponding to globally optimal solutions of this problem.

Define
$$
  \eta(x, \lambda, \mu) = \big\| \nabla_x L(x, \lambda, \mu) \big\|^2 + \trace( \lambda^2 G(x)^2).
$$
In order to ensure that $\eta(x^*, \lambda^*, \mu^*) = 0$ iff $(x^*, \lambda^*, \mu^*)$ is a KKT-point of the problem
$(\mathcal{P})$ we need to utilize a proper constraint qualification. 

Let $x^*$ be a locally optimal solution of the problem $(\mathcal{P})$. Recall that the point $x^*$ is called
\textit{nondegenerate} \cite[Def.~4.70]{BonnansShapiro} iff
$$
  \begin{bmatrix}
    D G(x^*) \\
    \nabla h(x^*)
  \end{bmatrix} \mathbb{R}^d +
  \begin{bmatrix}
    \lineal T_{\mathbb{S}^l_{-}} \big( G(x^*) \big) \\
    \{ 0 \}
  \end{bmatrix} =
  \begin{bmatrix}
    \mathbb{S}^l \\
    \mathbb{R}^s
  \end{bmatrix},
$$
where $D G(x^*)$ is the Fr\'{e}chet derivative of $G(\cdot)$ at the point $x^*$, ``lin'' stands for the lineality
subspace of a convex cone, i.e. the largest linear space contained in this cone, and $T_{\mathbb{S}^l_{-}} ( G(x^*) )$
is the contingent cone to the cone of $l \times l$ negative semidefinite matrices $\mathbb{S}^l_{-}$ at the point
$G(x^*)$. Let us note that the nondegenracy condition guarantees that there exists a \textit{unique} Lagrange multiplier
at $x^*$ \cite[Proposition~4.75]{BonnansShapiro}.

The above nondegeneracy condition can be rewritten as a linear independen\-ce-type condition. Namely, 
let $\rank G(x^*) = r$. Then the point $x^*$ is nondegenerate iff the $d$-dimensional vectors
\begin{equation} \label{Nondegeneracy_SemiDef}
  v_{ij} = 
  \left( e_i^T D_{x_1} G(x^*) e_j, \ldots, e_i^T D_{x_d} G(x^*) e_j \right)^T,
  \quad	\nabla h_k(x^*)
\end{equation}
are linearly independent, where $1 \le i \le j \le l - r$, $e_1, \ldots e_{l - r}$ is a basis of the null space of the
matrix $G(x^*)$, $1 \le k \le s$, and $h(x) = (h_1(x), \ldots, h_s(x))$ (see \cite[Proposition~5.71]{BonnansShapiro}).

Note that by \cite[Lemma~4]{Dolgopolik_GSP} the nondegeneracy conditions guarantees that 
$\eta(x^*, \lambda^*, \mu^*) = 0$ iff $(x^*, \lambda^*, \mu^*)$ is a KKT-point of the problem $(\mathcal{P})$, and the
matrix $D^2_{\nu \nu} \eta(x^*, \nu^*)$, where $\nu^* = (\lambda^*, \mu^*)$, is positive definite.

Let us introduce an augmented Lagrangian function for nonlinear semidefinite programming problems. Choose 
$\alpha > 0$ and $\varkappa \ge 1$, and define
$$
  p(x, \lambda) = \frac{a(x)}{1 + \trace(\lambda^2)}, \quad
  q(x, \mu) = \frac{b(x)}{1 + \| \mu \|^2},
$$
where
$$
  a(x) = \alpha - \trace\big( [ G(x) ]_+^2 \big)^{\varkappa}, \quad
  b(x) = \alpha - \| h(x) \|^2,
$$
and $[\cdot]_+$ is the projection of a matrix onto the cone of $l \times l$ positive semidefinite matrices. Denote
$\Omega_{\alpha} = \{ x \in \mathbb{R}^d \mid a(x) > 0, \: b(x) > 0 \}$, and define
\begin{multline} \label{EAL_SemiDef}
  \mathscr{L}(x, \lambda, \mu, c) = f(x) 
  + \frac{1}{2c p(x, \lambda)} \Big( \trace\big( [c G(x) + p(x, \lambda) \lambda]_+^2 \big) - 
  p(x, \lambda)^2 \trace(\lambda^2) \Big) \\
  + \langle \mu, h(x) \rangle + \frac{c}{2 q(x, \mu)} \| h(x) \|^2 + \eta(x, \lambda, \mu),
\end{multline}
if $x \in \Omega_{\alpha}$, and $\mathscr{L}(x, \lambda, \mu, c) = + \infty$, otherwise. It is easy to see that the
function $\mathscr{L}(\cdot, c)$ is lower semicontinuous for all $c > 0$. Furthermore, one can verify that
$\mathscr{L}(x, \lambda, \mu, c)$ is continuously differentiable on its effective domain, provided the functions $f$,
$G$ and $h$ are twice continuously differentiable (cf.~\cite[Sect.~3]{ShapiroSun}). Note also that the augmented
Lagrangian \eqref{EAL_SemiDef} is constructed from a straightforward modification of the Hestenes-Powell-Rockafellar
augmented Lagrangian to the case of semidefinite programming problems
\cite{SunZhangWu2006,SunSunZhang2008,ZhaoSunToh2010,LuoWuChen2012,WuLuoDingChen2013,WuLuoYang2014}

Let us obtain simple necessary and sufficient conditions for the global extended exactness of the augmented Lagrangian
function \eqref{EAL_SemiDef}. Denote $\Lambda = \mathbb{S}^l \times \mathbb{R}^s$.

\begin{theorem}[Localization Principle for Exact Augmented Lagrangian Functions] \label{Thrm_EAL_SemiDef_GlobalExact}
Let the functions $f$, $G$, and $h$ be continuously differentiable. Suppose also that every globally optimal
solution of the problem $(\mathcal{P})$ is nondegenerate. Then the augmented Lagrangian function
$\mathscr{L}(x, \lambda, \mu, c)$ is globally extendedly exact if and only if it is locally extendedly exact at every
globally optimal solution of the problem $(\mathcal{P})$, and one of the two following conditions is satisfied:
\begin{enumerate}
\item{the function $\mathscr{L}(x, \lambda, \mu, c)$ is non-degenerate;}

\item{the set $\{ (x, \lambda, \mu) \in \mathbb{R}^d \times \Lambda \mid \mathscr{L}(x, \lambda, \mu, c_0) < f^* \}$ is
bounded for some $c_0 > 0$;
}
\end{enumerate}
In particular, if the set 
$\Omega(\alpha, \gamma) = \{ x \in \mathbb{R}^d \mid f(x) < f^* + \gamma, \: a(x) > 0, \: b(x) > 0 \}$ is
bounded for some $\gamma > 0$, then the augmented Lagrangian function $\mathscr{L}(x, \lambda, \mu, c)$ is globally
extendedly exact if and only if it is locally extendedly exact at every globally optimal solution of the problem
$(\mathcal{P})$.
\end{theorem}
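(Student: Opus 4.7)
The plan is to apply the Localization Principle in the Extended Form twice: Theorem~\ref{Thrm_LocPrincipleExtended} delivers the equivalence of global extended exactness with local extended exactness at every point of $\Omega^*$ plus the non-degeneracy condition, while Theorem~\ref{Thrm_LocPrincipleExtended_SubLevel} delivers the equivalence with local extended exactness plus sublevel boundedness. The common structural assumptions --- $A=\mathbb{R}^d$ and $\Lambda=\mathbb{S}^l\times\mathbb{R}^s$ closed and finite-dimensional; $\Omega$ closed by continuity of $G$ and $h$; $\Omega^*$ closed by continuity of $f$; $\eta$ continuous so $\{(x,\nu)\in\Omega^*\times\Lambda\mid\eta(x,\nu)=0\}$ is closed; $\mathscr{L}(\cdot,c)$ lower semicontinuous --- are all routine. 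The existence of a multiplier $\nu^*=(\lambda^*,\mu^*)$ with $\eta(x^*,\nu^*)=0$ for every $x^*\in\Omega^*$ follows from \cite[Proposition~4.75]{BonnansShapiro} (uniqueness of the Lagrange multiplier at a nondegenerate point) combined with \cite[Lemma~4]{Dolgopolik_GSP} (equivalence of $\eta(x^*,\nu^*)=0$ with the KKT property under nondegeneracy).

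The crux is the meta-hypothesis of the Localization Principle: if $(x^*,\nu^*)\in\argmin_{(x,\nu)\in A\times\Lambda}\mathscr{L}(\cdot,c)$ for some $x^*\in\Omega^*$, $\nu^*\in\Lambda$ with $\eta(x^*,\nu^*)=0$, then $\mathscr{L}$ is globally extendedly exact for this $c$. The plan rests on the identity $\mathscr{L}(x^*,\nu^*,c)=f^*$ at every KKT triple with $x^*\in\Omega^*$, proved as follows: SDP complementarity $\lambda^*\succeq 0$, $G(x^*)\preceq 0$, $\trace(\lambda^* G(x^*))=0$ forces $\lambda^* G(x^*)=0$, so simultaneous diagonalisation yields $\trace([cG(x^*)+p(x^*,\lambda^*)\lambda^*]_+^2)=p(x^*,\lambda^*)^2\trace((\lambda^*)^2)$ and the matrix augmentation vanishes; $h(x^*)=0$ kills the Hestenes-type term; and $\eta(x^*,\nu^*)=0$ removes the last term. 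Hence $\min\mathscr{L}(\cdot,c)=f^*$, and the same identity applied to every $x'\in\Omega^*$ with its unique KKT multiplier $\nu'$ shows that each such pair is a global minimiser, yielding condition~2 of global extended exactness.

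The harder half of the meta-hypothesis is to show that every global minimiser $(x',\nu')$ of $\mathscr{L}(\cdot,c)$ satisfies $x'\in\Omega^*$ and $\eta(x',\nu')=0$; this I expect to be the main obstacle. The plan is to establish, on $\Omega_\alpha\times\Lambda$, a lower bound of the form $\mathscr{L}(x,\nu,c)\ge f(x)+[\text{matrix augmentation}]+[\text{Hestenes term}]+\eta(x,\nu)$ in which each of the three augmentation blocks is nonnegative under feasibility and jointly vanishes only at KKT triples; for the matrix block this is a Rockafellar-type completion-of-squares estimate adapted from \cite{SunZhangWu2006,ZhaoSunToh2010} exploiting $p(x,\lambda)>0$, while for the equality block the completion $\langle\mu,h\rangle+(c/2q)\|h\|^2=(c/2q)\|h+q\mu/c\|^2-q\|\mu\|^2/(2c)$ combined with an appropriate choice of $c_0$ yields the required nonnegativity modulo $\eta$. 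The penalty-type property then follows at once: for $c_n\to\infty$ and $(x_n,\nu_n)\in\argmin\mathscr{L}(\cdot,c_n)$, one has $\mathscr{L}(x_n,\nu_n,c_n)\le\mathscr{L}(\bar x,\bar\nu,c_n)=f^*$ for any KKT triple with $\bar x\in\Omega^*$, and the divergence $c_n\to\infty$ together with the nonnegativity of the augmentation blocks forces $h(x^*)=0$, $G(x^*)\preceq 0$, $\eta(x^*,\nu^*)=0$, and $f(x^*)\le f^*$ at any cluster point, so $x^*\in\Omega^*$.

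For the final ``in particular'' assertion, I reduce to condition~2 by showing that boundedness of $\Omega(\alpha,\gamma)$ forces the sublevel set $\{(x,\nu)\mid\mathscr{L}(x,\nu,c_0)<f^*\}$ to be bounded for all sufficiently large $c_0$. The containment $x\in\Omega_\alpha$ is automatic from $\dom\mathscr{L}(\cdot,c_0)\subset\Omega_\alpha\times\Lambda$, while a straightforward penalty-type estimate on $\mathscr{L}(x,\nu,c_0)$ promotes this to $f(x)<f^*+\gamma$, placing $x$ in the bounded set $\Omega(\alpha,\gamma)$. Boundedness in $(\lambda,\mu)$ then follows from the coercivity of $\eta(x,\cdot,\cdot)$ in the multiplier variables on a neighbourhood of $\Omega^*$ --- guaranteed by the positive definiteness of $D^2_{\nu\nu}\eta(x^*,\nu^*)$ at every $x^*\in\Omega^*$ (\cite[Lemma~4]{Dolgopolik_GSP}) --- combined with a uniform upper bound on the matrix and Hestenes augmentations for $x\in\Omega(\alpha,\gamma)$. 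Applying Theorem~\ref{Thrm_LocPrincipleExtended_SubLevel} then completes the argument.
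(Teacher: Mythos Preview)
Your overall architecture matches the paper's: verify the structural hypotheses of Theorems~\ref{Thrm_LocPrincipleExtended} and~\ref{Thrm_LocPrincipleExtended_SubLevel}, establish $\mathscr{L}(x^*,\nu^*,c)=f^*$ at KKT triples, prove the penalty-type property via $c_n\to\infty$, and for the ``in particular'' clause bound the sublevel set using coercivity of $\eta(x,\cdot)$ near $\Omega^*$. All of this is in line with the paper's Lemmas~\ref{Lemma_EAL_IntersectImpliesExact}--\ref{Lemma_SublevelSets}.

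The gap is in the ``harder half'' of the meta-hypothesis, where you must show that \emph{every} global minimiser $\xi_0=(x_0,\nu_0)$ of $\mathscr{L}(\cdot,c)$ has $x_0\in\Omega^*$ and $\eta(\xi_0)=0$. Your plan is a completion-of-squares lower bound with augmentation blocks ``nonnegative under feasibility'', but feasibility is precisely what must be proved, and the completed squares leave residuals $-p(x_0,\lambda_0)\|\lambda_0\|_F^2/(2c)$ and $-q(x_0,\mu_0)\|\mu_0\|^2/(2c)$ that are strictly negative with no a~priori control on $\|\nu_0\|$; nothing in $\eta$ absorbs them at a fixed $c$. The paper's key device, which you are missing, is monotonicity in $c$: one rewrites the matrix term as $\Phi(x,\lambda,c)=\min_{y\in\mathbb{S}^l_{-}-G(x)}\bigl(-p(x,\lambda)\langle\lambda,y\rangle+\tfrac{c}{2}\|y\|_F^2\bigr)$, which makes $c\mapsto\mathscr{L}(\xi,c)$ nondecreasing. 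Given that a KKT triple minimises at $c_0$, one has $\min\mathscr{L}(\cdot,c)=f^*$ for all $c\ge c_0$; for any minimiser $\xi_0$ at $c>c_0$, squeezing gives $\mathscr{L}(\xi_0,c_0)=\mathscr{L}(\xi_0,c)=f^*$. If $h(x_0)\ne 0$ the equality term is \emph{strictly} increasing in $c$, a contradiction, so $h(x_0)=0$; similarly the minimum in $\Phi(x_0,\lambda_0,\cdot)$ must be attained at $y=0$, forcing $0\in\mathbb{S}^l_{-}-G(x_0)$, i.e.\ $G(x_0)\preceq 0$. Feasibility then yields $f^*=f(x_0)+\eta(\xi_0)$, hence $x_0\in\Omega^*$ and $\eta(\xi_0)=0$. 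This strict-monotonicity argument, not a static sign estimate, is what closes the loop.
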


We divide the proof of the theorem above into three lemmas.

\begin{lemma} \label{Lemma_EAL_IntersectImpliesExact}
Let the functions $f$, $G$, and $h$ be continuously differentiable, and  let every globally optimal solution of the
problem $(\mathcal{P})$ be nondegenerate. Then the validity of the conditions
\begin{equation} \label{EAL_IntersectionImpliesExactness}
  (x^*, \lambda^*, \mu^*) \in \argmin_{(x, \lambda, \mu)} \mathscr{L}(x, \lambda, \mu, c_0), 
  \quad \eta(x^*, \lambda^*, \mu^*) = 0
\end{equation}
for some $x^* \in \Omega^*$, $(\lambda^*, \mu^*) \in \Lambda$ and $c_0 > 0$ implies that the augmented Lagrangian
function $\mathscr{L}(x, \lambda, \mu, c)$ is globally extendedly exact. 
\end{lemma}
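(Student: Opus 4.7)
The plan is to establish global extended exactness by a direct argument built on two structural properties of $\mathscr{L}$: at every KKT-triple associated to a globally optimal solution of $(\mathcal{P})$ the augmented Lagrangian evaluates to $f^* := \inf_\Omega f$ independently of $c$, and $\mathscr{L}$ is monotonically non-decreasing in $c$ with derivative vanishing exactly on the set of triples $(x,\lambda,\mu)$ for which $(x,\lambda)$ satisfies primal feasibility and complementarity. Together with the hypothesis~\eqref{EAL_IntersectionImpliesExactness}, these properties will force global minimizers of $\mathscr{L}(\cdot,c)$ at sufficiently large $c$ to coincide with the KKT-triples over $\Omega^*$.

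First, I would verify that for every $x^* \in \Omega^*$ and $(\lambda^*, \mu^*) \in \Lambda$ with $\eta(x^*, \lambda^*, \mu^*) = 0$ one has $\mathscr{L}(x^*, \lambda^*, \mu^*, c) = f^*$ for all $c > 0$. Nondegeneracy of $x^*$ and \cite[Lemma~4]{Dolgopolik_GSP} make $(x^*, \lambda^*, \mu^*)$ a KKT-triple, so $G(x^*) \preceq 0$, $h(x^*) = 0$, $\lambda^* \succeq 0$, and $\lambda^* G(x^*) = 0$; the last equality together with symmetry yields that $\lambda^*$ and $G(x^*)$ commute. Simultaneous diagonalization then gives $[cG(x^*) + p(x^*,\lambda^*)\lambda^*]_+ = p(x^*,\lambda^*)\lambda^*$ for every $c > 0$, and substituting this along with $h(x^*)=0$ and $\eta(x^*,\lambda^*,\mu^*)=0$ into~\eqref{EAL_SemiDef} collapses every term past $f(x^*) = f^*$ to zero.

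Next, I would compute $\partial_c \mathscr{L}$ directly. Since $p$, $q$ and $\eta$ do not depend on $c$, differentiation using $\nabla_M \trace(M_+^2) = 2M_+$ and the Moreau identity $\trace(M_+ M) = \trace(M_+^2)$ (for $M := cG(x) + p(x,\lambda)\lambda$, after rewriting $cG = M - p\lambda$) yields
\begin{equation*}
  \frac{\partial \mathscr{L}}{\partial c}(x,\lambda,\mu,c) = \frac{1}{2c^2 p(x,\lambda)} \| [cG(x) + p(x,\lambda)\lambda]_+ - p(x,\lambda)\lambda \|_F^2 + \frac{\|h(x)\|^2}{2 q(x,\mu)} \ge 0,
\end{equation*}
with equality iff $h(x)=0$ and $[cG(x) + p(x,\lambda)\lambda]_+ = p(x,\lambda)\lambda$. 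Uniqueness of the semidefinite Moreau decomposition $M = M_+ - M_-$ (with $M_\pm \succeq 0$ and $M_+ M_- = 0$) turns the latter identity into the three conditions $\lambda \succeq 0$, $G(x) \preceq 0$, $\lambda G(x) = 0$.

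Finally, I would assemble the pieces. By the first step and the hypothesis, $\mathscr{L}(\cdot, c_0) \ge \mathscr{L}(x^*,\lambda^*,\mu^*,c_0) = f^*$; monotonicity in $c$ then gives $\mathscr{L}(\cdot,c) \ge f^*$ for all $c \ge c_0$. Setting $c_1 := c_0 + 1$, for any $c \ge c_1$ condition~(ii) in the definition of extended exactness is immediate from Step~1 (combined with nondegeneracy, which identifies triples with $\eta = 0$ over $\Omega^*$ as KKT-triples). For condition~(i), given a global minimizer $(\bar x,\bar\lambda,\bar\mu)$ of $\mathscr{L}(\cdot,c)$, the bound $\mathscr{L}(\bar x,\bar\lambda,\bar\mu,c) \le \mathscr{L}(x^*,\lambda^*,\mu^*,c) = f^*$ together with the lower bound above gives $\mathscr{L}(\bar x,\bar\lambda,\bar\mu,c) = f^*$, and monotonicity then forces $\mathscr{L}(\bar x,\bar\lambda,\bar\mu,c') \equiv f^*$ on $[c_0,c]$; hence $\partial_c \mathscr{L}(\bar x,\bar\lambda,\bar\mu,c') \equiv 0$ there, and Step~2 yields $\bar\lambda \succeq 0$, $G(\bar x) \preceq 0$, $\bar\lambda G(\bar x) = 0$, and $h(\bar x)=0$. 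Substituting back into~\eqref{EAL_SemiDef} the penalty and coupling terms cancel, giving $f(\bar x) + \eta(\bar x,\bar\lambda,\bar\mu) = f^*$; since $\bar x$ is feasible, $f(\bar x) \ge f^*$ and $\eta \ge 0$, forcing $\bar x \in \Omega^*$ and $\eta(\bar x,\bar\lambda,\bar\mu)=0$. The main technical obstacle is the derivative calculation of Step~2 and the subsequent matrix-analytic extraction of complementarity from the single identity $[cG + p\lambda]_+ = p\lambda$; both rely on careful use of the semidefinite Moreau decomposition and the trace identity $\trace(M_+ M_-) = 0$.
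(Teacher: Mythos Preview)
Your proof is correct and reaches the same conclusion as the paper, but the mechanism for extracting feasibility and complementarity at a putative minimizer is genuinely different. The paper rewrites the semidefinite penalty term through the variational representation
\[
  \Phi(x,\lambda,c) = \min_{y \in \mathbb{S}^l_{-} - G(x)}\Big( -p(x,\lambda)\langle \lambda, y\rangle + \tfrac{c}{2}\|y\|_F^2 \Big),
\]
from which monotonicity in $c$ is immediate; for a minimizer $\xi_0$ at some $c > c_0$ it then argues that the inner minimum over $y$ must be attained at $y = 0$ (otherwise the strict increase from $c_0$ to $c$ would violate $\mathscr{L}(\xi_0,c_0) = \mathscr{L}(\xi_0,c) = f^*$), which forces $0 \in \mathbb{S}^l_{-} - G(x_0)$ and hence feasibility. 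You instead compute $\partial_c \mathscr{L}$ explicitly via the Moreau decomposition, obtaining the closed form $\tfrac{1}{2c^2 p}\|[cG + p\lambda]_+ - p\lambda\|_F^2 + \tfrac{1}{2q}\|h\|^2$, and read off from its vanishing on $[c_0,c]$ not only $G(\bar x) \preceq 0$ and $h(\bar x)=0$ but also $\bar\lambda \succeq 0$ and $\bar\lambda G(\bar x) = 0$. Your route is more hands-on but delivers complementarity for free and makes the cancellation in~\eqref{EAL_SemiDef} transparent; the paper's variational representation is slicker for monotonicity and avoids the matrix-calculus differentiation step. Both arguments rest on the same structural fact (constancy of $c \mapsto \mathscr{L}(\bar\xi,c)$ on an interval of positive length), merely decoded differently.
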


\begin{proof}
Introduce the function 
\begin{equation} \label{EAL_InequalPenTerm_Represent}
  \Phi(x, \lambda, c) = \min_{y \in \mathbb{S}^l_{-} - G(x)}
  \Big( - p(x, \lambda) \langle \lambda, y \rangle + \frac{c}{2} \| y \|_F^2 \Big),
\end{equation}
where $\langle \lambda, y \rangle = \trace(\lambda y)$ is the inner product in $\mathbb{S}^l$. Then for any 
$x \in \Omega_{\alpha}$ one has
\begin{equation} \label{EAL_Representation}
  \mathscr{L}(\xi, c) = f(x) + \frac{1}{p(x, \lambda)} \Phi(x, \lambda, c) +
  \langle \mu, h(x) \rangle + \frac{c}{2 q(x, \mu)} \| h(x) \|^2 + \eta(\xi),
\end{equation}
where $\xi = (x, \lambda, \mu)$ (see~\cite{ShapiroSun}, formulae $(2.5)$ and $(2.9)$). Therefore the function
$\mathscr{L}(\xi, c)$ is nondecreasing in $c$.

Suppose that \eqref{EAL_IntersectionImpliesExactness} holds true. Then by our assumption the point $x^*$ is
nondegenerate, and $\eta(x^*, \lambda^*, \mu^*) = 0$. With the use of \cite[Lemma~4]{Dolgopolik_GSP} one obtains
that $(x^*, \lambda^*, \mu^*)$ is a KKT-point of the problem $(\mathcal{P})$. Hence, in particular, 
$\langle \lambda^*, G(x^*) \rangle = 0$, and the matrix $\lambda^*$ is positive semidefinite. Consequently, applying the
standard first order necessary and sufficient conditions for a minimum of a convex function on a convex set one can easy
check that the minimum in
\begin{multline*}
  [c G(x^*) + p(x^*, \lambda^*) \lambda^*]_+^2 = 
  \dist^2 \big( c G(x^*) + p(x^*, \lambda^*) \lambda^*, \mathbb{S}^l_{-} \big) \\
  = \min_{z \in \mathbb{S}^l_{-}} \big\| c G(x^*) + p(x^*, \lambda^*) \lambda^* - z \big\|^2
\end{multline*}
is attained at the point $z = c G(x^*)$. Here we used the equality $\| P_{K^*}(y) \| = \dist(y, K)$ that is valid for
any closed convex cone $K$, where $P_{K^*}(\cdot)$ is the projection operator onto the polar cone $K^*$ of the cone $K$.
Thus, one has $\mathscr{L}(x^*, \lambda^*, \mu^*, c) = f^*$ for any $c > 0$ (see~\eqref{EAL_SemiDef}), which
implies that
\begin{equation} \label{EAL_GlobMinValue}
  \min_{(x, \lambda, \mu)} \mathscr{L}(x, \lambda, \mu, c) = f^* \quad \forall c \ge c_0
\end{equation}
due to the fact that the function $\mathscr{L}(\xi, c)$ is nondecreasing in $c$.

Let, now, $\overline{x} \in \Omega^*$ be arbitrary. Since $\overline{x}$ is nondegenerate, there exists a unique
pair $(\overline{\lambda}, \overline{\mu})$ such that the tripler $(\overline{x}, \overline{\lambda}, \overline{\mu})$
is a KKT-point of the problem $(\mathcal{P})$ \cite[Proposition~4.75]{BonnansShapiro}, and 
$\eta(\overline{x}, \lambda, \mu) = 0$ iff $(\lambda, \mu) = (\overline{\lambda}, \overline{\mu})$
\cite[Lemma~4]{Dolgopolik_GSP}. Arguing in the same way as above one can easily verify that
$\mathscr{L}(\overline{x}, \overline{\lambda}, \overline{\mu}, c) = f^*$ for all $c > 0$. Thus, one has
\begin{equation} \label{EAL_PartlyExact}
  \big\{ (x, \lambda, \mu) \in \Omega^* \times \Lambda \bigm| \eta(x, \lambda, \mu) = 0 \big\} 
  \subseteq \argmin_{(x, \lambda, \mu)} \mathscr{L}(x, \lambda, \mu, c) \quad \forall c \ge c_0.
\end{equation}
Let us prove the opposite inclusion. Then one can conclude that $\mathscr{L}(\xi, c)$ is globally extendedly exact.

Fix arbitrary $c > c_0$, and $\xi_0 = (x_0, \lambda_0, \mu_0) \in \argmin_{\xi} \mathscr{L}(\xi, c)$. Clearly, if 
$h(x_0) \ne 0$, then $\mathscr{L}(\xi_0, c) > \mathscr{L}(\xi_0, c_0) = f^*$, which is impossible due to
\eqref{EAL_GlobMinValue}. Consequently, $h(x_0) = 0$. Therefore from \eqref{EAL_Representation},
\eqref{EAL_InequalPenTerm_Represent} and \eqref{EAL_GlobMinValue}, the definition of $\xi_0$, and the fact that the
function $\mathscr{L}(\xi, c)$ is nondecreasing in $c$ it follows that
\begin{multline*}
  f^* = \mathscr{L}(\xi_0, c_0) = f(x_0) + \frac{1}{p(x_0, \lambda_0)} \Phi(x_0, \lambda_0, c_0) + \eta(\xi_0) \\
  \le f(x_0) + \frac{1}{p(x_0, \lambda_0)} 
  \left( - p(x_0, \lambda_0) \langle \lambda_0, y \rangle + \frac{c_0}{2} \| y \|_F^2 \right) + \eta(\xi_0)
\end{multline*}
for any $y \in \mathbb{S}^l_{-} - G(x_0)$. Hence for any $y \in (\mathbb{S}^l_{-} - G(x_0)) \setminus \{ 0 \}$ one has
$$
  f(x_0) + \frac{1}{p(x_0, \lambda_0)} 
  \left( - p(x_0, \lambda_0) \langle \lambda_0, y \rangle + \frac{c}{2} \| y \|_F^2 \right) + \eta(\xi_0) > f^*,
$$
which implies that the minimum in the definition of $\Phi(x_0, \lambda_0, c)$ (see
\eqref{EAL_InequalPenTerm_Represent}) is attained at the point $y = 0$, and $\Phi(x_0, \lambda_0, c) = 0$. Hence
$0 \in \mathbb{S}^l_{-} - G(x_0)$, i.e. $x_0$ is feasible, which yields that
$$
  f^* = \mathscr{L}(\xi_0, c) = f(x_0) + \eta(\xi_0) \ge f^*
$$
due to the fact that the function $\eta(\cdot)$ is nonnegative. Hence $\eta(\xi_0) = 0$ and $x_0 \in \Omega^*$. In
other words, the inclusion opposite to \eqref{EAL_PartlyExact} holds true, which completes the proof.
\end{proof}

\begin{remark} \label{Remark_EAL_ValueAtKKTpoints}
From the proof of the lemma above it follows that if $(x^*, \lambda^*, \mu^*)$ is a KKT-point of the problem
$(\mathcal{P})$, then $\mathscr{L}(x^*, \lambda^*, \mu^*, c) = f(x^*)$ for all $c > 0$.
\end{remark}

\begin{lemma} \label{Lemma_EAL_PenaltyType}
Let the functions $f$, $G$, and $h$ be continuously differentiable, and suppose that at least one of the globally
optimal solutions of the problem $(\mathcal{P})$ is nondegenerate. Then the augmented Lagrangian 
function $\mathscr{L}(x, \lambda, \mu, c)$ is a penalty-type separating function.
\end{lemma}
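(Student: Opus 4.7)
The plan is to verify the defining property of a penalty-type separating function directly. Take an increasing unbounded sequence $\{ c_n \}$ and $(x_n, \lambda_n, \mu_n) \in \argmin_{(x,\lambda,\mu)} \mathscr{L}(x, \lambda, \mu, c_n)$ with a cluster point $(x^*, \lambda^*, \mu^*)$; after passing to a subsequence we may assume $(x_n, \lambda_n, \mu_n) \to (x^*, \lambda^*, \mu^*)$, and the goal is to prove $x^* \in \Omega^*$ and $\eta(x^*, \lambda^*, \mu^*) = 0$. The crucial a priori bound comes from the hypothesis that at least one globally optimal solution is nondegenerate: by \cite[Proposition~4.75]{BonnansShapiro} such an $\overline{x}$ admits a (unique) KKT triple $(\overline{x}, \overline{\lambda}, \overline{\mu})$, and by Remark~\ref{Remark_EAL_ValueAtKKTpoints} one has $\mathscr{L}(\overline{x}, \overline{\lambda}, \overline{\mu}, c) = f^*$ for every $c > 0$. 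Hence $\mathscr{L}(x_n, \lambda_n, \mu_n, c_n) \le f^*$ for all $n$.

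Next, I would pass this bound to the cluster point. Inspecting \eqref{EAL_InequalPenTerm_Represent} and \eqref{EAL_Representation}, both $\Phi(x, \lambda, \cdot)$ and $\frac{c}{2 q(x, \mu)} \| h(x) \|^2$ are nondecreasing in $c$, so $\mathscr{L}(\cdot, \cdot, \cdot, c)$ is nondecreasing in $c$ on $\Omega_{\alpha} \times \Lambda$. Combined with the lower semicontinuity of $\mathscr{L}(\cdot, \cdot, \cdot, c)$, this gives, for every fixed $c > 0$,
\[
  \mathscr{L}(x^*, \lambda^*, \mu^*, c) \le \liminf_{n \to \infty} \mathscr{L}(x_n, \lambda_n, \mu_n, c) \le \liminf_{n \to \infty} \mathscr{L}(x_n, \lambda_n, \mu_n, c_n) \le f^*.
\]
In particular $x^* \in \Omega_{\alpha}$, so $p(x^*, \lambda^*) > 0$ and $q(x^*, \mu^*) > 0$.

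The core step is to extract feasibility of $x^*$ by letting $c \to +\infty$ in this uniform bound. If $h(x^*) \ne 0$, then $\frac{c}{2 q(x^*, \mu^*)} \| h(x^*) \|^2 \to +\infty$, while the inequality term is bounded below by $\frac{1}{p(x^*, \lambda^*)} \Phi(x^*, \lambda^*, c) \ge - \frac{p(x^*, \lambda^*) \| \lambda^* \|_F^2}{2c}$ (from the unconstrained minimum of the quadratic in \eqref{EAL_InequalPenTerm_Represent}), and the remaining terms are bounded, a contradiction. Similarly, if $[G(x^*)]_+ \ne 0$, then every $y \in \mathbb{S}^l_{-} - G(x^*)$ satisfies $\| y \|_F \ge \| [G(x^*)]_+ \|_F > 0$, hence
\[
  - p(x^*, \lambda^*) \langle \lambda^*, y \rangle + \tfrac{c}{2} \| y \|_F^2 \ge \| y \|_F \Bigl( \tfrac{c}{2} \| [G(x^*)]_+ \|_F - p(x^*, \lambda^*) \| \lambda^* \|_F \Bigr) \to + \infty,
\]
forcing $\Phi(x^*, \lambda^*, c) \to +\infty$ and again contradicting the bound. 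Therefore $h(x^*) = 0$ and $G(x^*) \preceq 0$, i.e.\ $x^*$ is feasible.

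Finally, with $x^*$ feasible, $y = 0$ belongs to $\mathbb{S}^l_{-} - G(x^*)$, so $\Phi(x^*, \lambda^*, c) \le 0$; together with the lower bound above this yields $p(x^*, \lambda^*)^{-1} \Phi(x^*, \lambda^*, c) \to 0$ as $c \to +\infty$. The term $\langle \mu^*, h(x^*) \rangle + \frac{c}{2 q(x^*,\mu^*)} \| h(x^*) \|^2$ vanishes, so sending $c \to +\infty$ in $\mathscr{L}(x^*, \lambda^*, \mu^*, c) \le f^*$ leaves $f(x^*) + \eta(x^*, \lambda^*, \mu^*) \le f^*$. Since $f(x^*) \ge f^*$ by feasibility and $\eta \ge 0$, both inequalities are equalities, giving $x^* \in \Omega^*$ and $\eta(x^*, \lambda^*, \mu^*) = 0$, as required. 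The main obstacle is the two-sided control on $\Phi(x^*, \lambda^*, c)$: a uniform $O(1/c)$ lower bound that prevents spurious cancellation in the feasible case, together with unbounded growth in $c$ in the infeasible case, both of which are needed to convert the a priori bound into feasibility and optimality.
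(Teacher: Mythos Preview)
Your proof is correct and follows a genuinely different route from the paper's. The paper works along the sequence: it derives the pointwise lower bound $\mathscr{L}(\xi_n, c_n) \ge f(x_n) - \alpha/c_n + \eta(\xi_n)$ (inequality \eqref{EAL_LowerEstimate}) and passes to the limit in $n$ to get $f(x^*) + \eta(\xi^*) \le f^*$; for feasibility it argues by contradiction at the points $\xi_{n_k}$ themselves, showing that a persistent constraint violation would force $\mathscr{L}(\xi_{n_k}, c_{n_k}) \to +\infty$. You instead transfer the bound to the cluster point once and for all, using monotonicity in $c$ together with lower semicontinuity to obtain $\mathscr{L}(\xi^*, c) \le f^*$ for every $c$, and then run the whole feasibility/optimality argument at the single fixed point $\xi^*$ by sending $c \to +\infty$.

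Your decoupling of the two limits is cleaner: once the uniform bound at $\xi^*$ is in hand, the rest is a one-variable analysis in $c$, and the two-sided control $-p(x^*,\lambda^*)\|\lambda^*\|_F^2/(2c) \le p(x^*,\lambda^*)^{-1}\Phi(x^*,\lambda^*,c) \le 0$ in the feasible case is a nice way to isolate the limit $f(x^*) + \eta(\xi^*) \le f^*$. The paper's approach, on the other hand, never invokes lower semicontinuity of $\mathscr{L}$ and yields the explicit estimate \eqref{EAL_LowerEstimate}, which it reuses verbatim in the proof of Lemma~\ref{Lemma_SublevelSets}; so its payoff is a quantitative bound that feeds into the sublevel-set argument, whereas your argument is more self-contained but does not produce that byproduct.
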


\begin{proof}
Let $\{ c_n \} \subset (0, + \infty)$ be an increasing unbounded sequence, and let also
$\xi_n = (x_n, \lambda_n, \mu_n) \in \argmin_{\xi} \mathscr{L}(\xi, c_n)$ for all $n \in \mathbb{N}$, and 
$\xi^* = (x^*, \lambda^*, \mu^*)$ be a cluster point of the sequence $\{ \xi_n \}$. Replacing, if necessary, the
sequence $\{ \xi_n \}$ with its subsequence one can suppose that $\xi_n$ converges to $\xi^*$.

Let $\overline{x}$ be a nondegenerate globally optimal solution of the problem $(\mathcal{P})$ that exists by our
assumption. Applying \cite[Proposition~4.75]{BonnansShapiro} one obtains that there exists 
$(\overline{\lambda}, \overline{\mu})$ such that the triplet 
$\overline{\xi} = (\overline{x}, \overline{\lambda}, \overline{\mu})$ is a KKT-point of the problem $(\mathcal{P})$.
Then $\mathscr{L}(\overline{\xi}, c) = f(\overline{x}) = f^*$ for all $c > 0$ by virtue of
Remark~\ref{Remark_EAL_ValueAtKKTpoints}, which implies that $\mathscr{L}(\xi_n, c_n) \le f^*$ for all 
$n \in \mathbb{N}$. 

Minimizing the function $\omega_n(t) = - \| \mu_n \| t + c_n t^2 / 2 q(x_n, \mu_n)$ with 
respect to $t \in \mathbb{R}$ (see~\eqref{EAL_SemiDef}) one obtains that
\begin{multline} \label{EAL_LowerEstimate}
  f^* \ge \mathscr{L}(\xi_n, c_n) \ge f(x_n) - \frac{\trace(\lambda_n^2) p(x_n, \lambda_n)}{2 c_n} 
  - \frac{\| \mu_n \|^2 q(x_n, \lambda_n)}{2 c_n} + \eta(\xi_n) \\
  \ge f(x_n) - \frac{\alpha}{c_n} + \eta(\xi_n)
\end{multline}
for all $n \in \mathbb{N}$. Hence passing to the limit as $n \to + \infty$ one gets that 
$f(x^*) + \eta(\xi^*) \le f^*$. Therefore it remains to prove that $x^*$ is feasible, since in this case one obtains
that $x^* \in \Omega^*$ and $\eta(\xi^*) = 0$, which implies the required result.

Arguing by reductio ad absurdum, suppose that $x^*$ is infeasible. Suppose, at first, that $h(x^*) \ne 0$. Then there
exist $\varepsilon > 0$ and a subsequence $\{ x_{n_k} \}$ such that $\| h(x_{n_k}) \| \ge \varepsilon$ for all 
$k \in \mathbb{N}$. Note that since $\{ \mu_n \}$ is a convergent sequence, there exists $M > 0$ such that
$\| \mu_n \| \le M$ for all $n \in \mathbb{N}$. Moreover, it is obvious that $\| h(x_n) \|^2 < \alpha$ for any 
$n \in \mathbb{N}$. Therefore
$$
  \mathscr{L}(\xi_{n_k}, c_{n_k}) \ge f(x_{n_k}) - \frac{\alpha}{2 c_{n_k}} - 
  M \sqrt{\alpha} + \frac{c_{n_k} \varepsilon^2}{2(\alpha - \varepsilon^2)}
$$
for all $k \in \mathbb{N}$ (clearly, one can suppose that $\varepsilon^2 < \alpha$). Hence one gets that
$\mathscr{L}(\xi_{n_k}, c_{n_k}) \to + \infty$ as $k \to + \infty$, which is impossible. Thus, $h(x^*) = 0$.

Suppose, now, that $G(x^*)$ is not negative semidefinite. Then there exist $\varepsilon > 0$ and a subsequence
$\{ x_{n_k} \}$ such that $\| [G(x_{n_k})]_+ \|_F^2 \ge \varepsilon$ for all $k \in \mathbb{N}$. From the facts that 
$\{ \lambda_n \}$ is a convergent sequence, and $c_n \to + \infty$ as $n \to \infty$ it follows that
$\| [G(x_{n_k}) + p(x_{n_k}, \lambda_{n_k}) \lambda_{n_k} / c_{n_k}]_+ \|_F^2 \ge \varepsilon / 2$ for any sufficiently
large $k \in \mathbb{N}$. Consequently, for any $k$ large enough one has
$$
  \mathscr{L}(\xi_{n_k}, c_{n_k}) \ge f(x_{n_k}) + \frac{c_{n_k} \varepsilon}{4 (\alpha - \varepsilon^{\varkappa})} - 
  \frac{\alpha}{c_{n_k}}
$$
(one can obviously suppose that $\varepsilon^{\varkappa} < \alpha$).
Hence $\mathscr{L}(\xi_{n_k}, c_{n_k}) \to + \infty$ as $k \to + \infty$, which is impossible. Thus, $x^*$ is a
feasible point of the problem $(\mathcal{P})$, and the proof is complete.
\end{proof}

\begin{lemma} \label{Lemma_SublevelSets}
Let the functions $f$, $G$, and $h$ be continuously differentiable. Suppose also that every globally optimal solution of
the problem $(\mathcal{P})$ is nondegenerate, and the set $\Omega(\alpha, \gamma)$ is bounded for some $\gamma > 0$.
Then the set $\{ \xi \in \mathbb{R}^d \times \Lambda \mid \mathscr{L}(\xi, c_0) < f^* \}$ is bounded for some $c_0 > 0$.
\end{lemma}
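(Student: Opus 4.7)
The plan is to argue by contradiction. Suppose that for every $c_0 > 0$ the set $\{\xi \in \mathbb{R}^d \times \Lambda \mid \mathscr{L}(\xi, c_0) < f^*\}$ is unbounded; choosing $c_n = n$ and picking $\xi_n = (x_n, \lambda_n, \mu_n)$ in the corresponding sublevel set with $\|\xi_n\| \ge n$, one obtains a sequence satisfying $\mathscr{L}(\xi_n, c_n) < f^*$, $c_n \to +\infty$, and $\|\xi_n\| \to +\infty$. Applying the lower bound \eqref{EAL_LowerEstimate} established inside the proof of Lemma~\ref{Lemma_EAL_PenaltyType}, namely $\mathscr{L}(\xi_n, c_n) \ge f(x_n) - \alpha/c_n + \eta(\xi_n)$, one immediately infers that $f(x_n) < f^* + \alpha/c_n$, which together with $x_n \in \Omega_{\alpha}$ places $x_n$ in the bounded set $\Omega(\alpha, \gamma)$ for all $n$ so large that $\alpha/c_n < \gamma$. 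Passing to a subsequence yields $x_n \to x^*$, and therefore $\|(\lambda_n, \mu_n)\| \to +\infty$.

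Next, I would reuse the feasibility blow-up arguments from the proof of Lemma~\ref{Lemma_EAL_PenaltyType}. Since $c_n \to +\infty$ while $\mathscr{L}(\xi_n, c_n) < f^*$ remains bounded above, any infeasibility of $x^*$ with respect to $h$ would force the equality contribution $\langle \mu_n, h(x_n) \rangle + c_n \|h(x_n)\|^2 / (2 q(x_n, \mu_n))$ to grow without bound after minimizing in $\mu_n$, and any infeasibility with respect to $G$ would force the inequality contribution to blow up via an estimate of the form $\Phi(x,\lambda,c)/p(x,\lambda) \ge c\|[G(x)]_+\|_F^2/(8\alpha) - \alpha/(2c)$, obtained from the non-expansiveness of $[\,\cdot\,]_+$ together with the bound $p(x,\lambda)\|\lambda\|_F \le \alpha/2$. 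Hence $x^*$ is feasible; combining $f(x_n) \le f^* + \alpha/c_n \to f^*$ with the inequality $f(x^*) \ge f^*$ yields $f(x^*) = f^*$, so $x^* \in \Omega^*$, and then $\eta(\xi_n) \to 0$ as well.

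The final step is a rescaling argument that exploits nondegeneracy of $x^*$. Setting $t_n = 1/\|(\lambda_n, \mu_n)\|$ and passing to a further subsequence, one may assume $(\tilde\lambda_n, \tilde\mu_n) := t_n(\lambda_n, \mu_n) \to (\tilde\lambda, \tilde\mu)$ with $\|(\tilde\lambda, \tilde\mu)\| = 1$. Dividing each summand of $\eta(\xi_n) \to 0$ by $\|(\lambda_n, \mu_n)\|^2$ and using the identity $\trace(\lambda^2 G(x)^2) = \|\lambda G(x)\|_F^2$ valid for symmetric matrices, a passage to the limit produces $\tilde\lambda G(x^*) = 0$ and $DG(x^*)^{*} \tilde\lambda + \nabla h(x^*)^T \tilde\mu = 0$. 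The first relation, together with the symmetry of $\tilde\lambda$, allows one to write $\tilde\lambda = E M E^T$ where the columns of $E$ form an orthonormal basis of $\ker G(x^*)$ and $M$ is a symmetric $(l - r) \times (l - r)$ matrix; substituting into the gradient equation and invoking the nondegeneracy of $x^*$ in its linear-independence formulation~\eqref{Nondegeneracy_SemiDef} then forces $M = 0$ and $\tilde\mu = 0$, contradicting $\|(\tilde\lambda, \tilde\mu)\| = 1$. The hardest step will be the quantitative feasibility estimate for the inequality constraint, because the inequality contribution couples $\lambda$ and $x$ through the scaling $p(x, \lambda)$ and the projection $[\,\cdot\,]_+$ onto $\mathbb{S}^l_+$, so one needs a lower bound that provides a $c \to \infty$ blowup uniformly with respect to how $\|\lambda\|_F$ might grow.
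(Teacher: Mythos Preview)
Your proposal is correct and follows the same overall contradiction strategy as the paper: pick $\xi_n$ with $\mathscr{L}(\xi_n,n)<f^*$ and $\|\xi_n\|\to\infty$, use \eqref{EAL_LowerEstimate} to trap $x_n$ in $\Omega(\alpha,\gamma)$ and extract $x_n\to x^*$, show $x^*\in\Omega^*$, and then contradict nondegeneracy of $x^*$ using $\|(\lambda_n,\mu_n)\|\to\infty$.

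The execution differs from the paper in two places. For feasibility of $x^*$, the paper does not produce a single inequality like your $\Phi/p\ge c\|[G(x)]_+\|_F^2/(8\alpha)-\alpha/(2c)$; instead it introduces auxiliary quantities $u_n=p(x_n,\lambda_n)\cdot(\text{ineq. term})$ and $w_n=q(x_n,\mu_n)\cdot(\text{eq. term})$, shows by a bookkeeping argument that both sequences $\{u_n\}$ and $\{w_n\}$ are bounded, and then argues that unboundedness of $u_n$ or $w_n$ would follow from infeasibility of $x^*$. Your explicit uniform-in-$(\lambda,\mu)$ lower bounds (via $p\|\lambda\|_F\le\alpha/2$, $p\|\lambda\|_F^2\le\alpha$, and non-expansiveness of $[\,\cdot\,]_+$) are a cleaner way to reach the same conclusion and avoid the intermediate $u_n,w_n$ layer. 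For the final contradiction, the paper invokes an external result (\cite[Lemma~4]{Dolgopolik_GSP}) asserting that $D^2_{\nu\nu}\eta(x^*,\lambda^*,\mu^*)$ is positive definite, which yields a quadratic lower bound $\eta(x,\lambda,\mu)\ge\delta_1(\|\lambda\|_F^2+\|\mu\|^2)+\text{lower order}$ for $x$ near $x^*$, forcing $\eta(\xi_n)\to+\infty$ and hence $\mathscr{L}(\xi_n,n)\to+\infty$. Your route is the contrapositive: you first deduce $\eta(\xi_n)\to 0$ from \eqref{EAL_LowerEstimate} and $f(x_n)\to f^*$, then rescale by $\|(\lambda_n,\mu_n)\|$ and pass to the limit to obtain a nonzero pair $(\tilde\lambda,\tilde\mu)$ with $\tilde\lambda G(x^*)=0$ and $DG(x^*)^*\tilde\lambda+\nabla h(x^*)^T\tilde\mu=0$, which you kill using the linear-independence form \eqref{Nondegeneracy_SemiDef} of nondegeneracy. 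Your argument is more self-contained since it does not quote the Hessian lemma, while the paper's version makes the coercivity of $\eta$ in the dual variables explicit.
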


\begin{proof}
Arguing by reductio ad absurdum, suppose that the set 
$S(c) = \{ \xi \in \mathbb{R}^d \times \Lambda \mid \mathscr{L}(\xi, c) < f^* \}$ is unbounded for any $c > 0$. Then
for any $n \in \mathbb{N}$ there exists $\xi_n = (x_n, \lambda_n, \mu_n) \in S(n)$ such that
$\| x_n \| + \| \lambda_n \|_F + \| \mu_n \| \ge n$. Applying inequality \eqref{EAL_LowerEstimate} one obtains that
$x_n \in \Omega(\alpha, \gamma)$ for any sufficiently large $n$. Therefore, without loss of generality one can suppose
that the sequence $x_n$ converges to a point $x^*$. Let us verify that $x^*$ is a globally optimal solution of the
problem $(\mathcal{P})$.

For any $n \in \mathbb{N}$ denote $w_n = q(x_n, \mu_n) \langle \mu_n, h(x_n) \rangle + n \| h(x_n) \|^2 / 2$, and
$$
  u_n = \frac{1}{2n} \Big( \trace\big( [n G(x_n) + p(x_n, \lambda_n) \lambda_n]_+^2 \big) - 
  p(x_n, \lambda_n)^2 \trace(\lambda_n^2) \Big).
$$
It is easy to check that $u_n / p(x_n, \lambda_n) \ge - \alpha / 2 n$ and $w_n / q(x_n, \mu_n) \ge - \alpha / 2 n$
for all $n \in \mathbb{N}$, i.e. the sequences $\{ u_n / p(x_n, \lambda_n) \}$ and $\{ w_n / q(x_n, \mu_n \}$ are
bounded below.

From \eqref{EAL_LowerEstimate} and the boundedness of the set $\Omega(\alpha, \gamma)$ it follows that there exists
$\theta \in \mathbb{R}$ such that $\mathscr{L}(\xi, c) \ge \theta$ for any $\xi \in \mathbb{R}^d \times \Lambda$ and 
for all $c \ge 1$. On the other hand, from the definition of $\xi_n$ it follows that $\mathscr{L}(\xi_n, n) < f^*$.
Thus, the sequence $\{ \mathscr{L}(\xi_n, n) \}$ is bounded. 

Observe that 
$$
  \mathscr{L}(\xi_n, n) = f(x_n) + \frac{u_n}{p(x_n, \lambda_n)} + \frac{w_n}{q(x_n, \mu_n)} + \eta(\xi_n).
$$
The sequence $\{ f(x_n) \}$ is bounded due to the fact that $\{ x_n \}$ is a convergent sequence. Hence and from the
facts that the sequences $\{ u_n / p(x_n, \lambda_n) \}$ and $\{ w_n / q(x_n, \mu_n) \}$ are bounded below, and the
function $\eta(\cdot)$ is nonnegative it follows that the sequence $\{ \eta(\xi_n) \}$ is bounded as well. Therefore the
sequence $\{ u_n / p(x_n, \lambda_n) + w_n / q(x_n, \mu_n) \}$ is bounded, which with the use of the boundedness
below of the sequences $\{ u_n / p(x_n, \lambda_n) \}$ and $\{ w_n / q(x_n, \mu_n) \}$ implies that the sequences 
$\{ u_n / p(x_n, \lambda_n) \}$ and $\{ w_n / q(x_n, \mu_n) \}$ are bounded. By definition one has
$0 < p(x_n, \lambda_n) \le \alpha$ and $0 < q(x_n, \mu_n) \le \alpha$ for all $n \in \mathbb{N}$. Therefore the
sequences $\{ u_n \}$ and $\{ w_n \}$ are bounded. Hence arguing in a similar way to the proof of
Lemma~\ref{Lemma_EAL_PenaltyType} one can easily check that $\| [ G(x_n) ]_+ \|_F \to 0$ and $\| h(x_n) \| \to 0$ as 
$n \to \infty$, which implies that $x^*$ is a feasible point of the problem $(\mathcal{P})$.

Indeed, suppose, for instance, that $h(x^*) \ne 0$. Then there exist $\varepsilon > 0$ and a subsequence 
$\{ x_{n_k} \}$ such that $\| h(x_{n_k} \| > \varepsilon$ for all $k \in \mathbb{N}$. Therefore 
$w_{n_k} \ge - \alpha \varepsilon + n_k \varepsilon^2 / 2$ for all $k \in \mathbb{N}$. Hence 
$w_{n_k} \to + \infty$ as $k \to \infty$, which contradicts the boundedness of the sequence $\{ w_n \}$. Thus,
$h(x^*) = 0$. The validity of the relation $G(x^*) \preceq 0$ is proved in a similar way.

From inequality \eqref{EAL_LowerEstimate} and the fact that $\xi_n \in S(n)$ it follows that 
$f(x_n) < f^* + \alpha / n$. Hence passing to the limit as $n \to \infty$ one obtains that $f(x^*) \le f^*$, which
implies that $x^* \in \Omega^*$ due to the feasibility of $x^*$.

By our assumption, the point $x^*$ is nondegerate. Hence by \cite[Proposition~4.75]{BonnansShapiro} there exists
$(\lambda^*, \mu^*)$ such that the triplet $\xi^* = (x^*, \lambda^*, \mu^*)$ is a KKT-point of the problem
$(\mathcal{P})$. Therefore the matrix $D^2_{\nu \nu} \eta(\xi^*)$, where $\nu = (\lambda, \mu)$, is positive definite by
\cite[Lemma~4]{Dolgopolik_GSP}. Consequently, taking into account the fact that $\nu \to \eta(x, \nu)$ is
obviously a quadratic function whose Hessian matrix continuously depends on $x$, one can easily verify that there
exist $\delta_1 > 0$, $\delta_2 \in \mathbb{R}$, $\delta_3 \in \mathbb{R}$ and a neighbourhood $U$ of $x^*$ such that 
$$
  \eta(x, \lambda, \mu) \ge \delta \big( \| \lambda \|_F^2 + \| \mu \|^2 \big)
  + \delta_2 \big( \| \lambda \|_F + \| \mu \| \big) + \delta_3 
  \quad \forall x \in U \: \forall (\lambda, \mu) \in \Lambda.
$$
Recall that $\mathscr{L}(\xi_n, n) \ge f(x_n) - \alpha / n + \eta(\xi_n)$ for all $n \in \mathbb{N}$ by virtue of
inequality \eqref{EAL_LowerEstimate}. Applying the above lower estimate for the term $\eta(\xi)$, and the facts
that $x_n$ converges to $x^*$, and $\| x_n \| + \| \lambda_n \|_F + \| \mu_n \| \ge n$ for all $n \in \mathbb{N}$
one obtains that $\mathscr{L}(\xi_n, n) \to + \infty$ as $n \to \infty$, which is impossible. Thus, the set $S(c)$ is
bounded for some $c > 0$.
\end{proof}

Now, applying Lemmas~\ref{Lemma_EAL_IntersectImpliesExact}--\ref{Lemma_SublevelSets} and the localization principle in
the extended form (Theorems~\ref{Thrm_LocPrincipleExtended} and \ref{Thrm_LocPrincipleExtended_SubLevel}) one obtains
that Theorem~\ref{Thrm_EAL_SemiDef_GlobalExact} holds true.

Let us also obtain simple sufficient conditions for the local extended exactness of the augmented Lagrangian function
$\mathscr{L}(x, \lambda, \mu, c)$. To this end, let us recall second order sufficient optimality conditions for
nonlinear semidefinite programming problems.

Let $x^*$ be a locally optimal solution of the problem $(\mathcal{P})$, and suppose that $x^*$ is nondegenerate. Then
by \cite[Proposition~4.75]{BonnansShapiro} there exists a unique pair $(\lambda^*, \mu^*) \in \Lambda$ such that the
triplet $(x^*, \lambda^*, \mu^*)$ is a KKT-point of the problem $(\mathcal{P})$. Suppose, at first, that 
$\rank(G(x^*)) < l$. We say that \textit{the second order sufficient optimality condition} holds true at $x^*$
(see~\cite[Theorem~5.89]{BonnansShapiro}) iff the matrix
\begin{equation} \label{SemiDef_SecondOrderOptCond}
  \nabla^2_{xx} L(x^*, \lambda^*, \mu^*) - 
  2 \Big[ \trace\Big( \lambda^* \cdot 
  \big( D_{x_i} G(x^*) G(x^*)^{\dagger} D_{x_j} G(x^*) \big)  \Big) \Big]_{i, j = 1}^d
\end{equation}
is positive definite on \textit{the critical cone}
\begin{equation} \label{SemiDef_CriticalCone}
  \Big\{ v \in \mathbb{R}^d \Bigm| \sum_{i = 1}^d v_i E_0^T D_{x_i} G(x^*) E_0 \preceq 0, \:
  \nabla h(x^*) v = 0, \: \langle \nabla f(x^*), v \rangle = 0 \Big\},
\end{equation}
where $G(x^*)^{\dagger}$ is the Moore-Penrose pseudoinverse of the matrix $G(x^*)$, and $E_0$ is 
$l \times (l - \rank(G(x^*)))$ matrix composed from the eigenvectors of $G(x^*)$ corresponding to its zero eigenvalue.
Note that if $\rank(G(x^*)) = l$, then the constraint $G(x) \preceq 0$ is inactive at $x^*$, i.e. $x^*$ is a locally
optimal solution of the problem of minimizing $f(x)$ subject to $h(x) = 0$. In this case $\lambda^* = 0$, and we set
$E_0 = 0$.

In the proof of the theorem below we combine the second-order analysis of Rockafellar-Wets' augmented Lagrangian from
\cite{ShapiroSun} (see~Section~3.1 of this paper) with the proof of Theorem~8 from \cite{Dolgopolik_GSP}.

\begin{theorem}
Let $\varkappa > 1$, and the functions $f$, $G$ and $h$ be twice differentiable at a locally optimal solution $x^*$ of
the problem $(\mathcal{P})$. Suppose also that $x^*$ is nondegenerate, and the second order sufficient optimality
condition holds true at $x^*$. Then $\mathscr{L}(x, \lambda, \mu, c)$ is locally extendedly exact at $x^*$.
\end{theorem}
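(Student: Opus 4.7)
The plan is to prove local extended exactness at $x^*$ by establishing a uniform quadratic growth of $\mathscr{L}(\cdot,c)$ around the unique multiplier triple. First, the setup: by nondegeneracy and \cite[Proposition~4.75]{BonnansShapiro} there exists a unique pair $(\lambda^*,\mu^*) \in \Lambda$ such that $\xi^* := (x^*,\lambda^*,\mu^*)$ is a KKT-point of $(\mathcal{P})$, and by \cite[Lemma~4]{Dolgopolik_GSP} this $\xi^*$ is the only point in $\{x^*\} \times \Lambda$ at which $\eta$ vanishes. Combined with Remark~\ref{Remark_EAL_ValueAtKKTpoints}, which gives $\mathscr{L}(\xi^*,c) = f(x^*)$ for every $c > 0$, the task reduces to producing $c_0 > 0$ and a neighbourhood $U$ of $\xi^*$ such that $\mathscr{L}(\xi,c) \ge f(x^*)$ for all $\xi \in U$ and $c \ge c_0$.

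The second step is a second-order Taylor expansion of $\mathscr{L}(\cdot,c)$ at $\xi^*$. The assumption $\varkappa > 1$ makes the inequality penalty term $C^2$ near $\xi^*$ despite the projection $[\,\cdot\,]_+$ onto $\mathbb{S}^l_+$ being only once differentiable on $\mathbb{S}^l$, since the boundary nonsmoothness of $\trace([G(x)]_+^2)^{\varkappa}$ is absorbed by the exponent. A direct computation using the KKT relations at $\xi^*$ gives $\nabla \mathscr{L}(\xi^*,c) = 0$ for every $c > 0$: the cancellation between the Lagrangian and the augmented terms yields a vanishing gradient of the Rockafellar--Wets-type piece at KKT-points, while $\eta \ge 0$ with $\eta(\xi^*) = 0$ forces $\nabla \eta(\xi^*) = 0$. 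I would then identify the Hessian $H(c) := \nabla^2 \mathscr{L}(\xi^*,c)$ block by block: the augmented-Lagrangian contribution, extracted along the lines of \cite[Section~3.1]{ShapiroSun}, has an $xx$-block that, modulo an $O(1/c)$ correction, coincides with the matrix \eqref{SemiDef_SecondOrderOptCond} on the critical cone \eqref{SemiDef_CriticalCone} and is of order $c$ on its orthogonal complement; the contribution of $\eta$ adds a positive semidefinite block whose $\nu\nu$-part, with $\nu = (\lambda,\mu)$, is positive definite by \cite[Lemma~4]{Dolgopolik_GSP}.

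The third step is to combine these blocks to show that $H(c)$ is uniformly positive definite for all large $c$. On the critical cone the $xx$-block is positive definite by the second-order sufficient condition, and on the complementary directions the $\Theta(c)$ coercive term eventually dominates any fixed competitor. The $\nu\nu$-block is uniformly positive definite thanks to $\eta$, and the $x\nu$ cross block is uniformly bounded in $c$, so a Schur-complement argument absorbs it for $c \ge c_0$. This yields a quadratic lower bound $\mathscr{L}(\xi,c) - f(x^*) \ge \beta \|\xi - \xi^*\|^2$ on a fixed neighbourhood $U$, with $\beta > 0$ independent of $c \ge c_0$, which proves local extended exactness.

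The hard part will be the Hessian identification in the second step: one needs to perform a spectral decomposition of $G(x^*)$ against its zero eigenspace (captured by the columns of $E_0$) and invoke the second-order sensitivity of the squared distance to $\mathbb{S}^l_-$ in order to bring the Hessian of the semidefinite penalty into precisely the form of the matrix \eqref{SemiDef_SecondOrderOptCond} restricted to the cone \eqref{SemiDef_CriticalCone}, with the remaining directions giving the $\Theta(c)$ coercivity. Once this alignment is in hand, the argument essentially fuses the second-order analysis of \cite[Section~3.1]{ShapiroSun} with the scheme of the proof of Theorem~8 in \cite{Dolgopolik_GSP}, as already suggested in the paragraph immediately preceding the theorem.
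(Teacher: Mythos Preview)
Your overall strategy matches the paper's---first-order stationarity at $\xi^*$, a second-order expansion, and then a comparison with the second-order sufficient condition---but there is a genuine gap in your second step that would make the Schur-complement route fail as written.

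You claim that the condition $\varkappa > 1$ makes ``the inequality penalty term $C^2$ near $\xi^*$.'' This is true only for the barrier term $a(x) = \alpha - \trace([G(x)]_+^2)^{\varkappa}$, where the exponent indeed absorbs the nonsmoothness at the feasible point $x^*$ (the paper uses exactly this fact to get $\nabla_{xx} p(x^*,\lambda^*)=0$). It is \emph{not} true for the principal penalty term $\trace\big([cG(x)+p(x,\lambda)\lambda]_+^2\big) = \dist^2\big(cG(x)+p(x,\lambda)\lambda,\mathbb{S}^l_-\big)$, which carries no exponent $\varkappa$. The squared distance $\delta(y)=\dist^2(y,\mathbb{S}^l_-)$ is only $C^1$; its second derivative fails to exist at matrices $y$ with a zero eigenvalue. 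At $\xi^*$ the argument is $cG(x^*)+p(x^*,\lambda^*)\lambda^*$, and unless strict complementarity holds (which the theorem does \emph{not} assume), this matrix has zero eigenvalues on the overlap of $\ker G(x^*)$ and $\ker \lambda^*$. Hence there is in general no classical Hessian $H(c)=\nabla^2\mathscr{L}(\xi^*,c)$ to which you could apply a block/Schur argument.

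The paper circumvents this by working with the second-order \emph{Hadamard directional} derivative $\delta''(y;v)$ and its representation via the second-order tangent set (the ``sigma term''). The resulting second-order expansion of $\mathscr{L}$ at $\xi^*$ contains a term $\varphi(\xi-\xi^*,c)$ that is positively homogeneous of degree two but is \emph{not} a quadratic form---it is defined through a minimum over the cone $C_0(x^*,\lambda^*)$. Because the second-order term is not bilinear, a Schur-complement argument is unavailable; instead the paper argues by contradiction: it extracts a sequence $\xi_n$ with $\mathscr{L}(\xi_n,n)<\mathscr{L}(\xi^*,n)$, normalizes, passes to a limit direction $(w^*,\nu^*)$, and uses the limiting behaviour of $\varphi$ as $c\to+\infty$ (the dichotomy \eqref{SecondOrderTermLim_1}--\eqref{SecondOrderTermLim_2}) to force $w^*$ into the critical cone with $\langle w^*,\Theta(x^*,\lambda^*)w^*\rangle\le 0$, contradicting the second-order sufficient condition. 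To repair your argument you would either have to add strict complementarity (then $\delta$ is $C^2$ at the relevant point and your plan works), or replace the Hessian-based step with this directional second-order analysis.
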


\begin{proof}
Let $(\lambda^*, \mu^*) \in \Lambda$ be a unique pair such that the triplet $(x^*, \lambda^*, \mu^*)$ is a KKT-point of
the problem $(\mathcal{P})$ that exists due to the fact that $x^*$ is nondegenerate. Note that by
\cite[Lemma~4]{Dolgopolik_GSP} one has $\eta(x^*, \lambda, \mu) = 0$ iff $\lambda = \lambda^*$ and $\mu = \mu^*$. Hence
taking into account the fact that the function $\mathscr{L}(x, \lambda, \mu, c)$ is nondecreasing in $c$ one obtains
that $\mathscr{L}(x, \lambda, \mu, c)$ is locally extendedly exact at $x^*$ iff $\xi^* = (x^*, \lambda^*, \mu^*)$ is a
point of local minimum of this function for any sufficiently large $c > 0$.

In order to verify that $\xi^*$ is indeed a point of local minimum of $\mathscr{L}(x, \lambda, \mu, c)$, let us
compute a second order expansion of this function in a neighbourhood of $\xi^*$. We start by computing the gradient of
this function at $\xi^*$. To this end, denote $K =  \mathbb{S}^l_{-}$, $\delta(y) = \dist^2 (y, K)$ for any 
$y \in \mathbb{S}^l$, and
$$
  \Phi_0(x, \lambda, c) = \delta\Big( G(x) + \frac{p(x, \lambda)}{c} \lambda \Big).
$$
Then for any $\xi = (x, \lambda, \mu) \in \Omega_{\alpha} \times \Lambda$ one has
\begin{multline} \label{EAL_SDP_Represendation}
  \mathscr{L}(\xi, c) = f(x) 
  + \frac{c}{2 p(x, \lambda)} \Big( \Phi_0(x, \lambda, c) - \frac{p(x, \lambda)^2}{c^2} \trace(\lambda^2) \Big) \\
  + \langle \mu, h(x) \rangle + \frac{c}{2 q(x, \mu)} \| h(x) \|^2 + \eta(\xi)
\end{multline}
(see the proof of Lemma~\ref{Lemma_EAL_IntersectImpliesExact}). By \cite[Theorem~4.13]{BonnansShapiro} the function
$\delta(y)$ is Fr\'echet differentiable, and $D \delta(y) = 2 (y - P_K(y))$ for all $y \in \mathbb{S}^l$. As it was
noted in the proof of Lemma~\ref{Lemma_EAL_IntersectImpliesExact}, one has 
$P_K (G(x^*) + p(x^*, \lambda^*) \lambda^* / c) = G(x^*)$ by virtue of the fact that $(x^*, \lambda^*, \mu^*)$ is a
KKT-point, which, in particular, implies that 
\begin{equation} \label{SDP_InequalTermValue}
  \Phi_0(x^*, \lambda^*, c) = \frac{p(x^*, \lambda^*)^2}{c^2} \| \lambda^* \|_F^2. 
\end{equation}
Observe also that since $p(x, \lambda) = (\alpha - \delta(G(x))^{\varkappa}) / (1 + \| \lambda \|^2_F)$ and 
$G(x^*) \in K$, one has $\nabla_x p(x^*, \lambda^*) = 0$. Therefore applying the chain rule one obtains that
\begin{equation} \label{SDP_InequalTermGradient}
\begin{split}
  \nabla_x \Phi_0(x^*, \lambda^*, c) & = 
  \frac{2 p(x^*, \lambda^*)}{c} \nabla_x \trace( \lambda^* G(x^*) ), \\
  \nabla_{\lambda} \Phi_0(x^*, \lambda^*, c) & = \frac{2 p(x^*, \lambda^*)^2}{c^2} \lambda^* + 
  \frac{2 p(x^*, \lambda^*)}{c^2} \| \lambda^* \|_F^2 \nabla_{\lambda} p(x^*, \lambda^*).
\end{split}
\end{equation}
Clearly, the function $\eta$ is differentiable at $\xi^*$. Hence and from the facts that the function $\eta(\xi)$ is
nonnegative, and $\eta(\xi^*) = 0$ one gets that 
\begin{equation} \label{SDP_PenTermGradient}
  \nabla_x \eta(\xi^*) = 0, \quad \nabla_{\lambda} \eta(\xi^*) = 0, \quad \nabla_{\mu} \eta(\xi^*) = 0.
\end{equation}
Consequently, computing the gradient of augmented Lagrangian \eqref{EAL_SDP_Represendation} with the use of
\eqref{SDP_InequalTermValue}, \eqref{SDP_InequalTermGradient} and \eqref{SDP_PenTermGradient} one obtains that
$$
  \nabla_{x} \mathscr{L}(\xi^*, c) = \nabla_x L(\xi^*) = 0, \quad
  \nabla_{\lambda} \mathscr{L}(\xi^*, c) = 0, \quad \nabla_{\mu} \mathscr{L}(\xi^*, c) = 0.
$$
(recall that $\xi^*$ is a KKT-point).

Now, let us compute a second order expansion of $\mathscr{L}(\xi, c)$ at $\xi^*$. At first, note that from the facts
that the functions $f$, $G$ and $h$ are twice differentiable at $x^*$ and $\nabla_x L(\xi^*) = 0$ it follows that for
any $\xi \in \mathbb{R}^d \times \Lambda$ one has
$$
  \nabla_x L(\xi, c) = D_{\xi} \Big( \nabla_x L(\xi^*) \Big) (\xi - \xi^*) + o( \| \xi - \xi^* \| ).
$$
Therefore
$$
  \big\| \nabla_x L(\xi, c) \big\|^2 =
  \Big\| D_{\xi} \Big( \nabla_x L(\xi^*) \Big) (\xi - \xi^*) \Big\|^2 + o( \| \xi - \xi^* \|^2 ),
$$
which implies that the function $\eta(\xi)$ is twice differentiable at $\xi^*$. Furthermore, the matrix 
$\nabla^2_{\nu \nu} \eta(\xi^*)$, where $\nu = (\lambda, \mu)$, is positive definite by \cite[Lemma~4]{Dolgopolik_GSP}.

Applying \cite[Theorem~4.133]{BonnansShapiro} (see also \cite[Sect.~3.1]{ShapiroSun}) one obtains that for all 
$y, v \in \mathbb{S}^l$ there exists the second-order Hadamard directional derivative
$$
  \delta''(y; v) := \lim_{[v', t] \to [v, +0]} 
  \frac{\delta(y + t v') - \delta(y) - t D \delta(y) v'}{\frac{1}{2}t^2},
$$
and
\begin{equation} \label{Dist_2ndHadamardDeriv_Representation}
  \delta''(y; v) = \min_{z \in \mathscr{C}(y)} 
  \Big[ 2 \| v - z \|^2 - 2 \sigma\big( y - \overline{y}, T^2_{K}(\overline{y}, z) \big) \Big],
\end{equation}
where $\mathscr{C}(y) = \{ z \in T_K(\overline{y}) \mid \langle y - \overline{y}, z \rangle = 0 \}$, $\overline{y}$ is
the projection of $y$ onto $K$, 
\begin{equation} \label{2ndOrderTangeSet_SupportFunc}
  \sigma( y - \overline{y}, T^2_{K}(\overline{y}, z)) = 
  \sup_{h \in T^2_{K}(\overline{y}, z)} \langle y - \overline{y}, h \rangle,
\end{equation}
and $T^2_{K}(\overline{y}, z)$ is \textit{the second-order tangent set} to the cone $K$ at the
point $\overline{y}$ in direction $z$ (see~\cite[Section~3.2.1]{BonnansShapiro}). Let us note that the restriction of
the support function \eqref{2ndOrderTangeSet_SupportFunc} to the cone $\mathscr{C}(y)$ is a non-positive quadratic
function of $z$ (see \cite{BonnansShapiro}, formulae~(5.205), (5.209) and (5.210)). Therefore, as one can easily verify,
the function $\delta''(y; \cdot)$ is finite, continuous and positively homogeneous of degree two. Utilizing these
properties of the function $\delta''(y, \cdot)$ one can check that for any bounded linear operator 
$T \colon E \to \mathbb{S}^l$ (here $E$ is a finite dimensional normed space) one has
\begin{equation} \label{Dist_2ndOrderExpansion}
  \delta\Big( y + T w + o(\| w \|) \Big) = \delta(y) + 
  D \delta(y) \Big( T w + o(\| w \|) \Big) + \frac{1}{2} \delta''\big(y; T w \big) + o( \| w \|^2 ).
\end{equation}
Hence and from the fact that for any $y \in K$ one has $\delta(y) = 0$ and $D \delta(y) = 2(y - P_K(y)) = 0$ it follows
that the function $\delta(\cdot)^{\varkappa}$ is twice differentiable at the point $G(x^*)$ and 
$D^2 \delta^{\varkappa}(G(x^*)) = 0$ (recall that $\varkappa > 1$). Therefore the function $p(x, \lambda)$ is twice
differentiable at $(x^*, \lambda^*)$ and
\begin{equation} \label{BarrierTermDerivatives}
  \nabla_x p(x^*, \lambda^*) = 0, \quad \nabla_{x \lambda} p(x^*, \lambda^*) = 0, \quad
  \nabla_{xx} p(x^*, \lambda^*) = 0.
\end{equation}
Consequently, for any $w = (w_x, w_{\lambda}) \in \mathbb{R}^d \times \Lambda$ in a neighbourhood of zero one has
\begin{multline*}
  G(x^* + w_x) + \frac{p(x^* + w_x, \lambda + w_{\lambda})}{c} (\lambda^* + w_{\lambda}) 
  = G(x^*) + \frac{p(x^*, \lambda^*)}{c} \lambda^* \\
  + D G(x^*) w_x + \frac{p(x^*, \lambda^*)}{c} w_{\lambda} 
  + \frac{1}{c} \langle \nabla_{\lambda} p(x^*, \lambda^*), w_{\lambda} \rangle \lambda^* 
  + \frac{1}{2} D^2 G(x^*)(w_x, w_x) \\
  + \frac{1}{2c} \langle \nabla_{\lambda} p(x^*, \lambda^*), w_{\lambda} \rangle w_{\lambda}
  + \frac{1}{2 c} D^2_{\lambda \lambda}p(x^*, \lambda^*)(w_{\lambda}, w_{\lambda}) \lambda^*
  + o( \| w \|^2 ),
\end{multline*}
Hence denoting the linear terms in this expansion by $T_c w$, i.e.
$$
  T_c w = D G(x^*) w_x + \frac{p(x^*, \lambda^*)}{c} w_{\lambda} 
  + \frac{1}{c} \langle \nabla_{\lambda} p(x^*, \lambda^*), w_{\lambda} \rangle \lambda^*,
$$
and applying \eqref{Dist_2ndOrderExpansion} one obtains that
\begin{multline*}
  \Phi_0(x^* + w_x, \lambda^* + w_{\lambda}, c) - \Phi_0(x^*, \lambda^*, c)
  = \frac{2 p(x^*, \lambda^*)}{c} \big\langle \lambda^*, T_c w \big\rangle \\
  + \frac{p(x^*, \lambda^*)}{c} \left\langle \lambda^*, D^2 G(x^*)(w_x, w_x) + 
  \frac{1}{c} D^2_{\lambda \lambda}p(x^*, \lambda^*)(w_{\lambda}, w_{\lambda}) \lambda^* \right\rangle \\
  + \frac{p(x^*, \lambda^*)}{c^2} \big\langle \nabla_{\lambda} p(x^*, \lambda^*), w_{\lambda} \big\rangle \cdot
  \big\langle \lambda^*, w_{\lambda} \big\rangle \\
  + \frac{1}{2} \delta''\left( G(x^*) + \frac{p(x^*, \lambda^*)}{c} \lambda^*; T_c w \right)
  + o( \| w \|^2 )
\end{multline*}
(recall that $D \delta(y) = 2 (y - P_K(y))$ and $P_K (G(x^*) + p(x^*, \lambda^*) \lambda^* / c) = G(x^*)$). Therefore
taking into account \eqref{BarrierTermDerivatives}, \eqref{SDP_InequalTermValue} and \eqref{SDP_PenTermGradient} one
obtains that for any $c > 0$ the augmented Lagrangian function $\mathscr{L}(\xi, c)$ admits the following second order
expansion at the point $\xi^*$:
\begin{multline*}
  \mathscr{L}(\xi, c) - \mathscr{L}(\xi^*, c) = 
  \frac{1}{2} \big\langle x - x^*, \nabla_{xx}^2 L(\xi^*) (x - x^*) \big\rangle
  + \frac{1}{2} \varphi(\xi - \xi^*, c) \\
  + \frac{1}{2} D^2 \eta(\xi^*) (\xi - \xi^*, \xi - \xi^*) + o\big( \| \xi- \xi^* \|^2 \big).
\end{multline*}
Here
\begin{multline*}
  \varphi(\xi, c) = \frac{c}{2 p(x^*, \lambda^*)} 
  \delta''\left( G(x^*) + \frac{p(x^*, \lambda^*)}{c} \lambda^*; T_c (x, \lambda) \right) \\
  - \frac{2}{p(x^*, \lambda^*)} \big\langle \nabla_{\lambda} p(x^*, \lambda^*) \lambda \big\rangle \cdot
  \big\langle \lambda^*, D G(x^*) x \big\rangle
  - \frac{1}{c} \big\langle \lambda^*, \lambda \big\rangle \cdot 
  \big\langle \nabla_{\lambda} p(x^*, \lambda^*), \lambda \big\rangle \\
  - \frac{1}{p(x^*, \lambda^*) c} \| \lambda^* \|_F^2 
  \big( \langle \nabla_{\lambda} p(x^*, \lambda^*), \lambda \rangle \big)^2 
  - \frac{p(x^*, \lambda^*)}{c} \| \lambda \|^2 \\
  + \langle \mu, \nabla h(x^*) x \rangle
  + \frac{c}{q(x^*, \lambda^*)}  \big\| \nabla h(x^*) x \big\|^2.
\end{multline*}
From \eqref{Dist_2ndHadamardDeriv_Representation} it follows that
\begin{multline*}
  \frac{c}{2 p(x^*, \lambda^*)} 
  \delta''\left( G(x^*) + \frac{p(x^*, \lambda^*)}{c} \lambda^*; T_c (x, \lambda) \right) \\
  = \min_{z \in C_0(x^*, \lambda^*)} \Big\{ \Big\| \frac{c}{p(x^*, \lambda^*)} \big(D G(x^*) x - z \big) + 
  \lambda + 
  \frac{1}{p(x^*, \lambda^*)} \langle \nabla_{\lambda} p(x^*, \lambda^*), \lambda \rangle \lambda^* \Big\|^2 \\
  - \sigma\big( \lambda^*, T^2_K(G(x^*), z) \big) \Big\},
\end{multline*}
where 
$$
  C_0(x^*, \lambda^*) = \mathscr{C}\left( G(x^*) + \frac{p(x^*, \lambda^*)}{c} \lambda^*\right)
  = \{ z \in T_K(G(x^*)) \mid \langle \lambda^*, z \rangle = 0 \}.
$$
As it was noted above, the function $\sigma( \lambda^*, T^2_K(G(x^*), \cdot))$ is quadratic and nonpositive on the cone 
$C_0(x^*, \lambda^*)$. Consequently, for any $\xi_0 = (x_0, \lambda_0) \in \mathbb{R}^d \times \Lambda$ one has
\begin{equation} \label{SecondOrderTermLim_1}
  \limsup_{[\xi, c] \to [\xi_0, + \infty]} \varphi(\xi, c) \ge 
  - \sigma\big( \lambda^*, T^2_K(G(x^*), D G(x^*) x_0) \big) \ge 0,
\end{equation}
if $D G(x^*) x_0 \in C_0(x^*, \lambda^*)$ and $\nabla h(x^*) x_0 = 0$, and
\begin{equation} \label{SecondOrderTermLim_2}
  \lim_{[\xi, c] \to [\xi_0, + \infty]} \varphi(\xi, c) = + \infty,
\end{equation}
otherwise.

Now we can turn to the proof of the local exactness. Utilizing the second order expansion for the augmented Lagrangian
$\mathscr{L}(\xi, c)$ one obtains that for any $c > 0$ there exists a neighbourhood $U_c$ of $\xi^*$ such that 
for all $\xi \in U_c$ one has
\begin{multline*} 
  \Big| \mathscr{L}(\xi, c) - \mathscr{L}(\xi^*, c) -
  \frac{1}{2} \big\langle x - x^*, \nabla_{xx}^2 L(\xi^*) (x - x^*) \big\rangle
  - \frac{1}{2} \varphi(\xi - \xi^*, c) \\
  - \frac{1}{2} D^2 \eta(\xi^*) (\xi - \xi^*, \xi - \xi^*) \Big| < \frac{1}{c} \| \xi- \xi^* \|^2.
\end{multline*}
Arguing by reductio ad absurdum, suppose that $\mathscr{L}(\xi, c)$ is not locally extendedly exact at $\xi^*$. Then
for any $n \in \mathbb{N}$ there exists $\xi_n \in U_n$ such that $\mathscr{L}(\xi_n, n) < \mathscr{L}(\xi^*, n)$.
Therefore
\begin{multline} \label{EAL_SecondOrderExpnasion}
  0 > \big\langle x_n - x^*, \nabla_{xx}^2 L(\xi^*) (x_n - x^*) \big\rangle + \varphi(\xi_n - \xi^*, n) \\
  + D^2 \eta(\xi^*) (\xi_n - \xi^*, \xi_n - \xi^*) - \frac{2}{n} \| \xi_n - \xi^* \|^2.
\end{multline}
for any $n \in \mathbb{N}$. Denote $\alpha_n = \| \xi_n - \xi^* \|$, $w_n = (x_n - x^*) / \alpha_n$, and 
$\nu_n = (\lambda_n, \mu_n) / \alpha_n$. Without loss of generality one can suppose that the sequence 
$\{ (w_n, \nu_n) \}$ converges to a point $\{ (w^*, \nu^*) \}$ such that $\| (w^*, \nu^*) \| = 1$.

Let us check that $w^* \ne 0$. Indeed, arguing by reductio ad absurdum, suppose that $w^* = 0$. Then dividing
inequality \eqref{EAL_SecondOrderExpnasion} by $\alpha_n^2$, and passing to the limit superior as $n \to \infty$ with
the use of \eqref{SecondOrderTermLim_1}, \eqref{SecondOrderTermLim_2}, and the fact that the function 
$\varphi(\cdot, c)$ is positively homogeneous of degree two (as well as $\delta''(y; \cdot)$) one obtains that 
$D^2_{\nu \nu} \eta(\xi^*)(\nu^*, \nu^*) \le 0$, which is impossible due to the fact that the function 
$D^2_{\nu \nu} \eta(\xi^*)(\cdot, \cdot)$ is positive definite by \cite[Lemma~4]{Dolgopolik_GSP}. Thus, $w^* \ne 0$.

Observe that $D^2 \eta(\xi^*)(\xi_n - \xi^*, \xi_n - \xi^*) \ge 0$ due to the facts that $\eta(\xi^*) = 0$, and 
the function $\eta$ is nonnegative. Hence and from \eqref{EAL_SecondOrderExpnasion} it follows that
$$
  0 > \big\langle w_n, \nabla_{xx}^2 L(\xi^*) w_n \big\rangle + 
  \varphi\left(\frac{1}{\alpha_n} (\xi_n - \xi^*), n \right) - \frac{2}{n}.
$$
Therefore passing to the limit superior as $n \to + \infty$ with the use of \eqref{SecondOrderTermLim_1} and
\eqref{SecondOrderTermLim_2} one obtains that $D G(x^*) w^* \in C_0(x^*, \lambda^*)$, $\nabla h(x^*) w^* = 0$, and
$$
  0 \ge \big\langle w^*, \nabla_{xx}^2 L(\xi^*) w^* \big\rangle 
  - \sigma\big( \lambda^*, T^2_K(G(x^*), D G(x^*) w^*) \big).
$$
Consequently, utilizing the known representation of the above ``sigma-term'' 
(see~\cite{BonnansShapiro}, formulae (5.208) and (5.209)) one obtains that 
$0 \ge \langle w^*, \Theta(x^*, \lambda^*) w^* \rangle$, where
$$
  \Theta(x^*, \lambda^*) =  \nabla^2_{xx} L(x^*, \lambda^*, \mu^*) - 
  2 \Big[ \trace\Big( \lambda^* \cdot 
  \big( D_{x_i} G(x^*) G(x^*)^{\dagger} D_{x_j} G(x^*) \big)  \Big) \Big]_{i, j = 1}^d.
$$
(cf.~\eqref{SemiDef_SecondOrderOptCond}). Furthermore, taking into accout the facts that 
$D G(x^*) w^* \in C_0(x^*, \lambda^*)$, $\nabla h(x^*) w^* = 0$ and $\nabla_x L(\xi^*) = 0$ (recall that $\xi^*$ is a
KKT-point) one obtains that $D G(x^*) w^* \in T_K(G(x^*))$ and $\langle \nabla f(x^*), w^* \rangle = 0$. Hence with the
use of the description of the contingent cone $T_K(G(x^*))$ in terms of the mapping $G(x^*)$
\cite[Example~2.65]{BonnansShapiro} one gets that $w^*$ belongs to the critical cone \eqref{SemiDef_CriticalCone},
which contradicts the fact that the second order sufficient optimality condition holds true at $x^*$. Thus, the
augmented Lagrangian function $\mathscr{L}(\xi, c)$ is locally exact at $\xi^*$.
\end{proof}

\section{Conclusions}

In this paper we developed a general theory of globally extendedly exact separating functions for constrained
optimization problems in finite dimensional spaces. We utilized this theory in order to obtain, in a simple and
straightforward manner, necessary and sufficient conditions for the global exactness of Huyer and Neumaier's extended
penalty function, and to design a new globally exact continuously differentiable augmented Lagrangian function for
nonlinear semidefinite programming problems.

\bibliographystyle{abbrv}  
\bibliography{ESF_II_bibl}

\begin{thebibliography}{10}

\bibitem{Bingzhuang}
L.~Bingzhuang and Z.~Wenling.
\newblock A modified exact smooth penalty function for nonlinear constrained
  optimization.
\newblock {\em J. Inequal. Appl.}, 1, 2012.

\bibitem{BonnansShapiro}
J.~F. Bonnans and A.~Shapiro.
\newblock {\em Perturbation analysis of optimization problems}.
\newblock Springer Science+Business Media, New York, 2000.

\bibitem{DiPilloGrippo1979}
G.~{\relax Di Pillo} and L.~Grippo.
\newblock A new class of augmented {L}agrangians in nonlinear programming.
\newblock {\em SIAM J. Control Optim.}, 17:618--628, 1979.

\bibitem{DiPilloGrippo1982}
G.~{\relax Di Pillo} and L.~Grippo.
\newblock A new augmented {L}agrangian function for inequality constraints in
  nonlinear programming problems.
\newblock {\em J. Optim. Theory Appl.}, 36:495--519, 1982.

\bibitem{DiPilloGrippo1980}
G.~{\relax Di Pillo}, L.~Grippo, and F.~Lampariello.
\newblock A method for solving equality constrained optimization problems by
  unconstrained minimization.
\newblock In K.~Iracki, K.~Malanowski, and S.~Walukiewicz, editors, {\em
  Optimization techniques: proceedings of the 9th IFIP Conference on
  Optimization Techniques}, pages 96--105. Springer-Verlag, Berlin, Heidelberg,
  1980.

\bibitem{DiPilloLiuzzi2003}
G.~{\relax Di Pillo}, G.~Liuzzi, S.~Lucidi, and L.~Palagi.
\newblock An exact augmented {L}agrangian function for nonlinear programming
  with two-sided constraints.
\newblock {\em Comput. Optim. Appl.}, 25:57--83, 2003.

\bibitem{DiPilloEtAl2002}
G.~{\relax Di Pillo}, G.~Liuzzi, S.~Lucidi, and L.~Palagi.
\newblock Fruitful uses of smooth exact merit functions in constrained
  optimization.
\newblock In G.~{\relax Di Pillo} and A.~Murli, editors, {\em High Performance
  Algorithms and Software for Nonlinear Optimization}, pages 201--225. Kluwer
  Academic Publishers, Dordrecht, 2003.

\bibitem{DiPilloLucidi1996}
G.~{\relax Di Pillo} and S.~Lucidi.
\newblock On exact augmented {L}agrangian functions in nonlinear programming.
\newblock In G.~{\relax Di Pillo} and F.~Giannessi, editors, {\em Nonlinear
  Optimization and Applications}, pages 85--100. Plenum Press, New York, 1996.

\bibitem{DiPilloLucidi2001}
G.~{\relax Di Pillo} and S.~Lucidi.
\newblock An augmented {L}agrangian function with improved exactness
  properties.
\newblock {\em SIAM J. Optim.}, 12:376--406, 2001.

\bibitem{DiPilloLucidiPalagi1993}
G.~{\relax Di Pillo}, S.~Lucidi, and L.~Palagi.
\newblock An exact penalty-{L}agrangian approach for a class of constrained
  optimization problems with bounded variables.
\newblock {\em Optim.}, 28:129--148, 1993.

\bibitem{DiPilloLiuzzi2011}
G.~{\relax Di Pillo}, G.~Luizzi, and S.~Lucidi.
\newblock An exact penalty-{L}agrangian approach for large-scale nonlinear
  programming.
\newblock {\em Optim.}, 60:223--252, 2011.

\bibitem{Dolgopolik_OptLet2}
M.~V. Dolgopolik.
\newblock Smooth exact penalty function {II}: a reduction to standard exact
  penalty functions.
\newblock {\em Optim. Lett.}, 10:1541--1560, 2016.

\bibitem{Dolgopolik_OptLet}
M.~V. Dolgopolik.
\newblock Smooth exact penalty functions: a general approach.
\newblock {\em Optim. Lett.}, 10:635--648, 2016.

\bibitem{Dolgopolik_UT}
M.~V. Dolgopolik.
\newblock A unifying theory of exactness of linear penalty functions.
\newblock {\em Optim.}, 65:1167--1202, 2016.

\bibitem{Dolgopolik_UnifiedApproach_I}
M.~V. Dolgopolik.
\newblock A unified approach to the global exactness of penalty and augmented
  {L}agrangian functions {I}: parametric exactness.
\newblock {\em arXiv: 1709.07073}, 2017.

\bibitem{DolgopolikMV_UT_2}
M.~V. Dolgopolik.
\newblock A unifying theory of exactness of linear penalty functions {II}:
  parametric penalty functions.
\newblock {\em Optim.}, 66:1577--1622, 2017.

\bibitem{Dolgopolik_GSP}
M.~V. Dolgopolik.
\newblock Augmented {L}agrangian functions for cone constrained optimization:
  the existence of global saddle points and exact penalty property.
\newblock {\em J. Glob. Optim.}, 71:237--296, 2018.

\bibitem{DuLiangZhang2006}
X.~Du, Y.~Liang, and L.~Zhang.
\newblock Further study on a class of augmented {L}agrangians of {\relax di
  pillo} and grippo in nonlinear programming.
\newblock {\em J. Shanghai Univ. (Engl. Ed.)}, 10:293--298, 2006.

\bibitem{DuZhangGao2006}
X.~Du, L.~Zhang, and Y.~Gao.
\newblock A class of augmented {L}agrangians for equality constraints in
  nonlinear programming problems.
\newblock {\em Appl. Math. Comput.}, 172:644--663, 2006.

\bibitem{FukudaLourenco}
E.~Fukuda and B.~F. Lourenco.
\newblock Exact augmented lagrangian functions for nonlinear semidefinite
  programming.
\newblock {\em Comput. Optim. Appl.}, 71:457--482, 2018.

\bibitem{Giannessi_book}
F.~Giannessi.
\newblock {\em Constrained Optimization and Image Space Analysis. Volume 1:
  Separation of Sets and Optimality Conditions}.
\newblock Springer, New York, 2005.

\bibitem{HuyerNeumaier}
W.~Huyer and A.~Neumaier.
\newblock A new exact penalty function.
\newblock {\em SIAM J. Optim.}, 13:1141--1158, 2003.

\bibitem{JianLin}
C.~Jiang, Q.~Lin, C.~Yu, K.~L. Teo, and G.-R. Duan.
\newblock An exact penalty method for free terminal time optimal control
  problem with continuous inequality constraints.
\newblock {\em J. Optim. Theory Appl.}, 154:30--53, 2012.

\bibitem{LiYu}
B.~Li, C.~J. Yu, K.~L. Teo, and G.~R. Duan.
\newblock An exact penalty function method for continuous inequality
  constrained optimal control problem.
\newblock {\em J. Optim. Theory Appl.}, 151:260--291, 2011.

\bibitem{LinLoxton}
Q.~Lin, R.~Loxton, K.~L. Teo, and Y.~H. Wu.
\newblock Optimal feedback control for dynamic systems with state constraints:
  An exact penalty approach.
\newblock {\em Optim. Lett.}, 8:1535--1551, 2014.

\bibitem{LinWuYu}
Q.~Lin, R.~Loxton, K.~L. Teo, Y.~H. Wu, and C.~Yu.
\newblock A new exact penalty method for semi-infinite programming problems.
\newblock {\em J. Comput. Appl. Math.}, 261:271--286, 2014.

\bibitem{Lucidi1988}
S.~Lucidi.
\newblock New results on a class of exact augmented {L}agrangians.
\newblock {\em J. Optim. Theory Appl.}, 58:259--282, 1988.

\bibitem{LuoWuLiu2013}
H.~Luo, H.~Wu, and J.~Liu.
\newblock Some results on augmented {L}agrangians in constrained global
  optimization via image space analysis.
\newblock {\em J. Optim. Theory Appl.}, 159:360--385, 2013.

\bibitem{LuoWuChen2012}
H.~Z. Luo, H.~X. Wu, and G.~T. Chen.
\newblock On the convergence of augmented {L}agrangian methods for nonlinear
  semidefinite programming.
\newblock {\em J. Glob. Optim.}, 54:599--618, 2012.

\bibitem{MaLiYiu}
C.~Ma, X.~Li, K.-F.~C. Yiu, and L.-S. Zhang.
\newblock New exact penalty function for solving constrained finite min-max
  problems.
\newblock {\em Appl. Math. Mech.-Engl. Ed.}, 33:253--270, 2012.

\bibitem{MaZhang2015}
C.~Ma and L.~Zhang.
\newblock On an exact penalty function method for nonlinear mixed discrete
  programming problems and its applications in search engine advertising
  problems.
\newblock {\em Appl. Math. Comput.}, 271:642--656, 2015.

\bibitem{ShapiroSun}
A.~Shapiro and J.~Sun.
\newblock Some properties of the augmented {L}agrangian in cone constrained
  optimization.
\newblock {\em Math. Oper. Res.}, 29:479--491, 2004.

\bibitem{SunSunZhang2008}
D.~Sun, J.~Sun, and L.~Zhang.
\newblock The rate of convergence of the augmented {L}agrangian method for
  nonlinear semidefinite programming.
\newblock {\em Math. Program.}, 114:349--391, 2008.

\bibitem{SunZhangWu2006}
J.~Sun, L.~W. Zhang, and Y.~Wu.
\newblock Properties of the augmented {L}agrangian in nonlinear semidefinite
  optimization.
\newblock {\em J. Optim. Theory Appl.}, 129:437--456, 2006.

\bibitem{WangMaZhou}
C.~Wang, C.~Ma, and J.~Zhou.
\newblock A new class of exact penalty functions and penalty algorithms.
\newblock {\em J. Glob. Optim.}, 58:51--73, 2014.

\bibitem{WuLuoDingChen2013}
H.~Wu, H.~Luo, X.~Ding, and G.~Chen.
\newblock Global convergence of modified augmented {L}agrangian methods for
  nonlinear semidefintie programming.
\newblock {\em Comput. Optim. Appl.}, 56:531--558, 2013.

\bibitem{WuLuoYang2014}
H.~X. Wu, H.~Z. Luo, and J.~F. Yang.
\newblock Nonlinear separation approach for the augmented {L}agrangian in
  nonlinear semidefinite programming.
\newblock {\em J. Glob. Optim.}, 59:695--727, 2014.

\bibitem{ZhaoSunToh2010}
X.~Y. Zhao, D.~Sun, and K.-C. Toh.
\newblock A {N}ewton-{C}{G} augmented {L}agrangian method for semidefinite
  programming.
\newblock {\em SIAM J. Optim.}, 20:1737--1765, 2010.

\bibitem{ZhengZhang2015}
F.~Zheng and L.~Zhang.
\newblock Constrained global optimization using a new exact penalty function.
\newblock In D.~Gao, N.~Ruan, and W.~Xing, editors, {\em Advances in Global
  Optimization}, pages 69--76. Springer, Cham, 2015.

\end{thebibliography}

\end{document}